\documentclass[reqno]{amsart} 
\usepackage{amssymb}
\usepackage{amsmath}
\usepackage{graphicx,color}
\usepackage{t1enc}
\usepackage[utf8]{inputenc}

\usepackage{amsthm}

\usepackage{enumitem}

\usepackage{latexsym}
\usepackage{subfigure}
\usepackage{tikz}
\usepackage{float}
\usepackage{graphicx,color}
\newtheorem{theorem}{Theorem}[section]
\newtheorem{definition}{Definition}[section]
\newtheorem{lemma}{Lemma}[section]
\newtheorem{proposition}{Proposition}[section]
\newtheorem{remark}{Remark}[section]

\newtheorem{corollary}{Corollary}[section]

\newcommand\restr[2]{{% we make the whole thing an ordinary symbol
		\left.\kern-\nulldelimiterspace % automatically resize the bar with \right
		#1 % the function
		\vphantom{\big|} % pretend it's a little taller at normal size
		\right|_{#2} % this is the delimiter
}}

\pretolerance10000

\begin{document}

\title[ Existence and non-existence of positive solutions]{A new result on the existence and non-existence of positive solutions for two-parametric systems of quasilinear elliptic equations}

%\date{}
\author{\bf\large R.L. Alves }%\footnote{Ricardo Lima Alves acknowledges the support of CNPq/Brazil } \hspace{2mm}
% {\bf\large Name}\vspace{1mm}\\
 
%\date{\small{Received ---------------} \\
%{\it\small Communicated by ----------}}

\maketitle
\begin{center}
	{\bf\small Abstract}
	
	\vspace{3mm} \hspace{.05in}\parbox{4.5in} {{\small This paper is devoted to the existence and non-existence of positive solutions for a $(p, q)$-Laplacian system with indefinite nonlinearity depending on two parameters $(\lambda,\mu)$. By using the sub-supersolution method together with adaptations of ideas found in \cite{AAS,CRMZ}, we extend some previous result.	 } }
\end{center}

\noindent
%\subjclass[2010]{35B09; 35J62; 35B30; 35B38; 35B99}

{\it \footnotesize 2010 Mathematics Subject Classification}. {\scriptsize 35B09; 35J62; 35B30; 35B38; 35B99}.\\
{\it \footnotesize Key words}. {\scriptsize Indefinite nonlinearity, sub-supersolutions, extremal curve, multiplicity.}

%%%%%%%%%%%%%%%%%%%%%%%%%%%%%%%%%%%%%%%%%%%%%%%%%%%%%%%%%%%%%%%%%%%%%%%%%%%%%%%%%%%%%%%%%%%%%%%%%%%%%%%%%%%%%%%%%%%%%%%%%%%%%%%%%%%%%
%
%
%
\section{\bf Introduction}
\def\theequation{1.\arabic{equation}}\makeatother
\setcounter{equation}{0}
In this paper we consider the system of equations
\def\theequation{1.\arabic{equation}}\makeatother
\setcounter{equation}{0}
\begin{equation*}\label{pq}\tag{$P_{\lambda,\mu}$}
\left\{
\begin{aligned}
&-\Delta_{p} u = \lambda \vert u\vert^{p-2}u+\alpha f(x) \vert u\vert^{\alpha-2}\vert v\vert ^{\beta}u~in ~ \Omega ,\\
&-\Delta_{q} v =  \mu \vert  v\vert^{q-2}v+\beta f(x)\vert u\vert ^{\alpha}\vert v\vert^{\beta-2}v~in~ \Omega ,\\
&(u,v)\in W^{1,p}_{0}(\Omega)\times  W^{1,q}_{0}(\Omega),\\ 
&u,v>0~\mbox{in}~\Omega, \nonumber
\end{aligned}
\right.
\end{equation*}
where $\Omega \subset \mathbb{R}^{N}$ ($N\geq 3$) is a smooth bounded domain;  the function $f\in L^{\infty}(\Omega)$ and has indefinite sign, that is $f^{+}=\max \left\{f,0\right\}$ and $f^{-}=\min \left\{f,0\right\}$ are not identically zero in $\Omega$; $\lambda,\mu\in \mathbb{R}$; $\Delta_{r}$ ($r\in \left\{p,q\right\}$) is the $r$-Laplacian operator; $1<p,q<\infty$,
\begin{equation}\label{pot}
\alpha\geq p,\beta\geq q~\mbox{and}~\frac{\alpha}{p}+\frac{\beta}{q}>1,~~~~\frac{\alpha}{p^{\ast}}+\frac{\beta}{q^{\ast}} <1,
\end{equation}
where $p^{\ast}$ and $q^{\ast}$ are the critical Sobolev exponents of $W^{1,p}_{0}(\Omega)$ and $W^{1,q}_{0}(\Omega)$.

%Also, the exponents and function $f$ satisfy some other technical conditions (see \cite{SM1}). 

We will use the symbols $(\hat{\lambda}_{1},\phi_{1})$ and $(\hat{\mu}_{1},\psi_{1})$ for the first eigenpairs of the operators $-\Delta_{p}$ and $-\Delta_{q}$ in $\Omega$ with zero boundary conditions, respectively. 

Consider the product space $E=W_{0}^{1,p}(\Omega)\times W_{0}^{1,q}(\Omega)$ equipped with the norm
$$\Vert (u,v)\Vert= \left(\displaystyle \int_{\Omega}\vert \nabla u\vert^{p}\right)^{\frac{1}{p}}+ \left(\displaystyle \int_{\Omega}\vert \nabla v\vert^{q}\right)^{\frac{1}{q}}:=\Vert u\Vert_{p}+\Vert v\Vert_{q},~(u,v)\in E.$$

We say that $(u,v)\in E$ is a positive solution of \eqref{pq} if $u>0$, $v>0$ in $\Omega$ and 
\begin{align*}
\displaystyle \int_{\Omega}\vert \nabla u\vert^{p-2}\nabla u \nabla \phi + \displaystyle \int_{\Omega}\vert \nabla v\vert^{q-2} \nabla v \nabla\psi- \displaystyle \lambda \displaystyle \int_{\Omega}\vert u\vert^{p-2}u\phi-\alpha \displaystyle \int_{\Omega}f(x) \vert u\vert^{\alpha-2}\vert v\vert ^{\beta}u\phi\\
- \mu \displaystyle \int_{\Omega} \vert  v\vert^{q-2}v\psi-\beta \displaystyle \int_{\Omega}f(x)\vert u\vert ^{\alpha}\vert v\vert^{\beta-2}v\psi=0, \forall (\phi,\psi)\in E.
\end{align*}

Thus, the corresponding energy functional of problem \eqref{pq} is defined by
$$I_{\sigma}(u,v)=\frac{1}{p}\left(\Vert u \Vert^{p}_{p}-\lambda\vert u\vert_{p}^{p}\right)+\frac{1}{q}\left(\Vert  v \Vert^{q}_{q}-\mu\vert v\vert_{q}^{q}\right)-F(u,v), (u,v)\in E,$$
where $\sigma=(\lambda,\mu)$, $F(u,v)=\displaystyle \int_{\Omega}f\vert u\vert^{\alpha}\vert v\vert^{\beta}$ and $\vert . \vert_{p},\vert . \vert_{q}$ are the standard $L^{p}(\Omega)$ and $L^{q}(\Omega)$ norm. A pair of functions $(u,v)\in E$ is said to be a weak solution of \eqref{pq} if $(u,v)$ is a critical point of $I_{\sigma}$. Let us observe that if $\sigma=(\hat{\lambda}_{1},\mu),\mu \in \mathbb{R}$ (respectively  $\sigma=(\lambda,\hat{\mu}_{1}),\lambda \in \mathbb{R}$), then $(\epsilon \phi_{1},0 ),\epsilon \in \mathbb{R}$ (respectively $(0,\epsilon \psi_{1} ),\epsilon \in \mathbb{R}$) is a weak solution of $(P_{\hat{\lambda}_{1},\mu})$ (respectively ($P_{\lambda,\hat{\mu}_{1}}$)).

Under suitable assumptions on the function $f$ (see assumptions $(f)_{1}-(f)_{2}$ bellow), and by using the fibering Method of Pohozaev, problem \eqref{pq} has been studied by Bozhkov-Mitidieri \cite{BM}, Bobkov-Il'yasov \cite{BI1,BI2}
and Silva-Macedo \cite{SM1}. From these works we know that:
\begin{itemize}
	\item if $(\lambda,\mu)\in (-\infty,\hat{\lambda}_{1}[\times (-\infty,\hat{\mu}_{1}[$, then problem \eqref{pq} admits at least one positive solution (see \cite{BI1,BM});
	\item there exist curves $\Gamma_{\ast} \subset \Pi:=\left\{(\lambda,\mu): \hat{\lambda}_{1}<\lambda~\mbox{and}~\hat{\mu}_{1}<\mu \right\}$, $\Gamma^{\ast}\subset \left\{(\lambda,\mu):0<\lambda~\mbox{and}~0<\mu\right\}$ which determine regions $\Pi_{1},\Pi_{2}\subset \Pi$ satisfying $\Gamma_{\ast}\cap \Pi_{1}=\emptyset, \Gamma^{\ast}\cap \Pi_{2}=\emptyset$ such that problem \eqref{pq} has no
	positive solution and has at least one according to $(\lambda,\mu)$ belongs to $\Pi_{2}$ and
	$\Pi_{1}$, respectively (see \cite{BI1,BI2,SM1}).
	
	These results are depicted in the following figure. We observe that they showed the existence of a positive solution only in the blue region.
	%\item there exists a curve $\Gamma_{\ast} \subset \Pi$ such that problem \eqref{pq} has at least one positive solution with zero energy for $(\lambda,\mu)\in \Gamma_{\ast}$. Moreover, for each $\sigma=(\lambda,\mu)\in \Gamma_{\ast}$, there exists $\epsilon_{\sigma}>0$ such that problem \eqref{pq} has  at least one positive solution with negative energy for every $(\lambda,\mu)\in [\lambda,\lambda+\epsilon_{\sigma})\times [\mu,\mu+\epsilon_{\sigma})$ (see \cite{SM1}).
\end{itemize}

\begin{center}
	\begin{tikzpicture}
	\draw[thick,->] (-2,0) -- (4.5,0);
	\draw[thick,->] (0,-2) -- (0,4.5);
	\draw[dashed] (-2,1) -- (4.5,1);
	\draw[dashed] (1,-2) -- (1,4.5);
	\filldraw[fill=blue, draw=black, thick,dashed] (1.005,1.005)--(2,1) arc (1:75:1.35cm);
	\draw[thick,dashed] (3,0) arc (0.2:75:4.08cm);
	\fill[blue] (-1.999,-1.9999) rectangle (0.99,0.98);
	\draw (4.5,0) node[below]{$\lambda$};
	\draw (0,4.4) node[left]{$\mu$};
	\draw (0.9,0) node[below right]{$\hat{\lambda}_{1}$};
	\draw (0,0.9) node[above left]{$\hat{\mu}_{1}$};
	\draw (2,1.5) node[above ]{$\Gamma_{\ast}$};
	\draw (2.5,2.2) node[above ]{$\Gamma^{\ast}$};
	\draw (3,3) node[above ]{$\Pi_{2}$};
	\draw (0,0) node[above ]{$\Pi_{1}$};
	\draw (1.5,1.3) node[above ]{$\Pi_{1}$};
	\draw (1,-2) node[below]{\textrm{Figure 1. $\Pi_{1}=$ blue region; $\Pi_{2}=\left\{(\lambda,\mu):(0,0)<\Gamma^{\ast}(t)<(\lambda,\mu)\right\}$}};
	
	\end{tikzpicture}
\end{center}

%Using the Nehari manifold method and the fibering method they proved the existence of a extremal curve $\gamma^{\ast}\subset \mathbb{R}_{0}^{+}\times \mathbb{R}_{0}^{+}$ such that the system has at least one positive solution for $(\lambda,\mu)\in \left\{(\lambda,\mu)\in \mathbb{R}_{0}^{+}\times \mathbb{R}_{0}^{+}:(\hat{\lambda}_{1},\hat{\mu}_{1})< (\lambda,\mu)\leq\gamma^{\ast}\right\}$ $(\mathbb{R}_{0}^{+}=(0,\infty))$, and for each $\sigma=(\lambda_0,\mu_0)\in \gamma^{\ast}$, there exists a positive real number $\epsilon_{\sigma}>0$ such that the system has at least one positive solution for $(\lambda,\mu)\in [\lambda_0,\lambda_0 +\epsilon_{\sigma})\times  [\mu_0,\mu_0 +\epsilon_{\sigma})$. This result improves the works \cite{BI1,BI2} and \cite{BM}. Moreover, as emphasized in \cite{SM1}, they have not proven multiplicity of solutions.

The  main  novelty  in  the  present  paper  is  the  investigation  of  the  existence  and nonexistence of positive solution outside the sets $\Pi_{1}$ and $\Pi_{2}$ (see Figure 1). Special attention is paid in finding of the curve $\Gamma \subset \mathbb{R}^{2}, \Gamma(t)=(\lambda(t),\mu(t)),t\in \mathbb{R}$ called to be the extremal curve such that problem \eqref{pq} admits a positive solution if $\lambda<\lambda(t),\mu<\mu(t),t\in \mathbb{R}$ and $(\lambda,\mu)\notin \Lambda_{1}\cup \Lambda_{2}$, while if $\lambda>\lambda(t)$ and $\mu>\mu(t),t\in \mathbb{R}$ then \eqref{pq} admits no positive solution (see \eqref{Gam} for the definition of the sets $\Lambda_{1}$ and $\Lambda_{2}$). 

Our proof is based on the super and sub-solution method and some adaptations of the ideas found in the works of the author et al. \cite{AAS,CRMZ} and Cheng-Zhang \cite{CZ}. As in \cite{BM,BI1,BI2,SM1}, throughout this paper we will
assume the following hypothesis
\begin{itemize}
	\item[$(f)_{1}$] $F(\phi_{1},\psi_{1})=\displaystyle \int_{\Omega}f\vert \phi_{1}\vert^{\alpha}\vert \psi_{1}\vert^{\beta}<0$;
	\item[$(f)_{2}$] Let $\Omega^{0}=\left\{x\in \Omega:f(x)=0\right\}$ and $\Omega^{+}=\left\{x\in \Omega:f(x)>0\right\}$. The measure of  $\Omega^{0}$ is not zero and the interior of  $\Omega^{0}\cup \Omega^{+}$ is a regular domain. Moreover $\tilde{\Omega}=int(\Omega^{0}\cup \Omega^{+})$ contains a connected component which intersects both $\Omega^{0}$ and $\Omega^{+}$.
\end{itemize}

Before stating our main results, we introduce the following two sets
\begin{equation}\label{Gam}
\Lambda_{1}=\left\{(\lambda,\hat{\mu}_{1})\in \mathbb{R}^{2}: \lambda\leq \hat{\lambda}_{1} \right\}~\mbox{and}~ \Lambda_{2}=\left\{(\hat{\lambda}_{1},\mu)\in \mathbb{R}^{2}: \mu\leq \hat{\mu}_{1} \right\}.
\end{equation}

%Let 
%$$I_{\sigma}(u,v)=\frac{1}{p}\left(\Vert \nabla u \Vert^{p}-\lambda\vert u\vert_{p}^{p}\right)+\frac{1}{q}\left(\Vert  v \Vert^{q}-\mu\vert v\vert_{q}^{q}\right)-F(u,v),$$
%where $\sigma=(\lambda,\mu)$ and $F(u,v)=\displaystyle \int_{\Omega}f\vert u\vert^{\alpha}\vert v\vert^{\beta}$.

%Due to regularity results (see Bobokov-Yavidat), any weak solution $(u,v)$ of $(P)$ is a $C^{1}(\overline{\Omega})$-solution. We say that a weak solution $(u,v)$ is positive, whenever $u,v>0$ a.e. in $\Omega$. By Lemma $B.1$ and Corollary $B.3$ of Bobkov-Yavidat any nontrivial nonnegative solution is positive.

Our first result is the following.

\begin{theorem}\label{T1}
	Assume \eqref{pot} and $(f)_{1}-(f)_{2}$ hold.	There exist $\lambda_{\ast},\mu_{\ast}\in \mathbb{R}$ and a continuous simple arc $\Gamma$, satisfying $\displaystyle \lim_{t\to +\infty}\Gamma(t)=(\lambda_{\ast},-\infty)$ and $\displaystyle \lim_{t\to -\infty}\Gamma(t)=(-\infty,\mu_{\ast})$ that separates $\mathbb{R}^{2}$
	into two disjoint open subsets $\Theta_{1}$ and $\Theta_{2}$, with $\Lambda_{1}\cup \Lambda_{2}\subset \Theta_{1}$ such that the system \eqref{pq} has no
	positive solution and has at least one according to $(\lambda,\mu)$ belongs to $\Theta_{2}$ and
	$\Theta_{1}\backslash \left(\Lambda_{1}\cup \Lambda_{2}\right)$, respectively. Moreover, $\partial \Theta_{1}\backslash \left(\Lambda_{1}\cup \Lambda_{2}\right)=\Gamma$.
\end{theorem}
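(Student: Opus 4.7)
The plan is to define $\mathcal{E}\subset \mathbb{R}^{2}$ as the set of parameters for which \eqref{pq} admits a positive solution and to set $\Theta_{1}$ equal to the union of $\Lambda_{1}\cup \Lambda_{2}$ with the interior of the lower-left shadow of $\mathcal{E}$, i.e.\ the set of $(\lambda,\mu)$ that are strictly dominated componentwise by some element of $\mathcal{E}$; then $\Theta_{2}:=\mathbb{R}^{2}\setminus \overline{\Theta_{1}}$ and $\Gamma:=\partial \Theta_{1}\setminus (\Lambda_{1}\cup \Lambda_{2})$. Two structural properties will drive the argument: a monotonicity of existence with respect to $(\lambda,\mu)$, and an a priori non-existence bound for large parameters. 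Together these force $\partial \Theta_{1}$ to be a simple decreasing arc with the prescribed asymptotic behavior.

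For the monotonicity I would use the sub-super solution method. If $(u_{0},v_{0})$ solves $(P_{\lambda_{0},\mu_{0}})$ and $\lambda<\lambda_{0}$, $\mu<\mu_{0}$, then $(u_{0},v_{0})$ is immediately a supersolution of $(P_{\lambda,\mu})$, since lowering the spectral parameters decreases each right-hand side pointwise. The delicate point is to produce a subsolution dominated by $(u_{0},v_{0})$. Following \cite{AAS,CRMZ}, one takes a small multiple $(\epsilon \phi_{1},\epsilon \psi_{1})$ of the first eigenpair when $(\lambda,\mu)$ lies below $(\hat{\lambda}_{1},\hat{\mu}_{1})$; for parameters in the other quadrants one truncates the eigenfunctions to the nondegenerate region $\tilde{\Omega}=\textrm{int}(\Omega^{0}\cup \Omega^{+})$ furnished by $(f)_{2}$, so that the nonlinear term involving $f^{+}$ compensates the missing spectral gap. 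For $\epsilon$ small enough, and after possibly rescaling, the pair $(\epsilon \phi_{1},\epsilon \psi_{1})$ sits below $(u_{0},v_{0})$, and a monotone iteration for the weakly coupled $(p,q)$-system in the spirit of \cite{AAS,CRMZ} produces a positive solution of $(P_{\lambda,\mu})$.

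For non-existence I would adapt a Picone-type comparison: testing the two equations against $\phi_{1}^{p}/u^{p-1}$ and $\psi_{1}^{q}/v^{q-1}$ and applying the anisotropic Picone identity for the $r$-Laplacian, one obtains after rearrangement an inequality of the form
$$(\lambda-\hat{\lambda}_{1})\int_{\Omega}\phi_{1}^{p}+(\mu-\hat{\mu}_{1})\int_{\Omega}\psi_{1}^{q}\le -C\, F(\phi_{1},\psi_{1}),$$
valid for any positive solution $(u,v)$, with $C>0$ absolute. Since $F(\phi_{1},\psi_{1})<0$ by $(f)_{1}$, the right-hand side is negative and bounded, forcing $(\lambda,\mu)$ to lie in a region bounded above in each coordinate once the other is fixed. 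Combined with the componentwise monotonicity, the boundary $\partial \Theta_{1}\setminus (\Lambda_{1}\cup \Lambda_{2})$ becomes the graph of a strictly decreasing, hence continuous and simple, function, which is the claimed $\Gamma$.

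Finally, the asymptotic behavior $\Gamma(t)\to (\lambda_{\ast},-\infty)$ as $t\to +\infty$ (and the symmetric limit at $-\infty$) follows from a limiting analysis: as $\mu\to -\infty$, testing the second equation against $v$ shows that $v$ collapses to zero, which decouples the first equation in the limit and reduces the existence threshold to a sharp scalar value $\lambda_{\ast}$ that coincides with the critical parameter of the one-equation $p$-Laplacian problem studied in \cite{BI1,BI2,SM1}. The main obstacle I foresee is the subsolution construction for $(\lambda,\mu)$ near the exceptional lines $\Lambda_{1}\cup \Lambda_{2}$: there the natural eigenfunction candidate degenerates towards an exact solution of a decoupled eigenvalue problem, and maintaining the strict subsolution inequality simultaneously in both equations requires a careful choice of truncation on $\tilde{\Omega}$ and of $\epsilon$ depending on the distance to $\Lambda_{1}\cup \Lambda_{2}$, along the lines of \cite{AAS,CRMZ,CZ}.
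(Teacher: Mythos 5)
Your overall architecture (existence set, monotone lower shadow, boundary curve, componentwise monotonicity plus an a priori bound) matches the paper's, but two of your three driving ingredients have genuine gaps. First, the monotonicity step: your subsolution $(\epsilon\phi_{1},\epsilon\psi_{1})$ fails in exactly the regions the theorem is about. For the first equation one needs $\hat{\lambda}_{1}(\epsilon\phi_{1})^{p-1}\leq \lambda(\epsilon\phi_{1})^{p-1}+\alpha f(\epsilon\phi_{1})^{\alpha-1}(\epsilon\psi_{1})^{\beta}$, which on $\{f<0\}$ forces $\lambda\geq\hat{\lambda}_{1}$; so in the quadrant $\lambda>\hat{\lambda}_{1}$, $\mu<\hat{\mu}_{1}$ the second component is not a subsolution, and truncating to $\tilde{\Omega}$ does not help because there $f\geq 0$ and the higher-order term is negligible for small $\epsilon$, so one would need $\lambda\geq\hat{\lambda}_{1}(\tilde{\Omega})$, i.e.\ precisely the non-existence threshold. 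Worse, monotone iteration between a sub- and a supersolution requires the system to be quasimonotone (cooperative), and the coupling terms $\alpha f u^{\alpha-1}v^{\beta}$ change monotonicity in $v$ according to the sign of $f$, so the iteration scheme is not available for indefinite $f$. The paper (Theorem \ref{sub-sup}) instead takes $(0,0)$ as the subsolution, \emph{minimizes} $I_{\sigma}$ over the order interval $M=\{0\leq u\leq\overline{u},\,0\leq v\leq\overline{v}\}$, and only afterwards rules out the trivial and semitrivial minimizers by the energy comparison $I_{\sigma}(u,v)\leq I_{\sigma}(\epsilon\phi_{1},0)<0$ (valid when $\lambda>\hat{\lambda}_{1}$) together with the simplicity of the first eigenvalue; this is why the exceptional sets $\Lambda_{1},\Lambda_{2}$ appear. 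Second, your non-existence inequality is not derivable: Picone applied with $\phi_{1}^{p}/u^{p-1}$ yields $(\lambda-\hat{\lambda}_{1})\int_{\Omega}\phi_{1}^{p}\leq-\alpha\int_{\Omega}f\,u^{\alpha-p}v^{\beta}\phi_{1}^{p}$, whose right-hand side depends on the unknown solution and has no sign or uniform bound, so it cannot be replaced by $-C\,F(\phi_{1},\psi_{1})$. Even granting your inequality, it only constrains a linear combination of $\lambda-\hat{\lambda}_{1}$ and $\mu-\hat{\mu}_{1}$ and would allow $\lambda\to+\infty$ as $\mu\to-\infty$, which is incompatible with the finite asymptote $\lambda_{\ast}$. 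The paper's Lemma \ref{l0} fixes this by testing with $\varphi\in C_{0}^{\infty}(\tilde{\Omega})$, where $f\geq 0$ lets one discard the nonlinear term and obtain the separate bounds $\lambda\leq\hat{\lambda}_{1}(\tilde{\Omega})$ and $\mu\leq\hat{\mu}_{1}(\tilde{\Omega})$; these individual bounds are what make $\lambda_{\ast},\mu_{\ast}$ finite.

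Third, and independently, the description of $\Gamma$ as ``the graph of a strictly decreasing, hence continuous'' function is wrong on both counts: the comparison structure only yields non-strict monotonicity, so the boundary may contain horizontal or vertical segments (hence is not a graph over the $\lambda$-axis), and a monotone function need not be continuous, so its graph need not be a connected arc. The paper's device is to sweep the plane by the lines $L(t)=\{(\lambda,\lambda-t)\}$ of slope one and set $\lambda(t)=\sup\{\lambda:(\lambda,\lambda-t)\in\overline{int(\Upsilon)}\}$, $\Gamma(t)=(\lambda(t),\lambda(t)-t)$; the opposite monotonicities of $\lambda(t)$ and $\mu(t)=\lambda(t)-t$ then give the Lipschitz estimate $\vert\Gamma(s)-\Gamma(t)\vert\leq\vert s-t\vert$, which delivers continuity, injectivity and the identification $\partial(int(\Upsilon))\setminus(\Lambda_{1}\cup\Lambda_{2})=\Gamma(\mathbb{R})$ (Lemma \ref{L8}). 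You would need this (or an equivalent transversal parametrization) to complete the argument; likewise, your identification of $\lambda_{\ast}$ with the scalar critical parameter via a decoupling limit is an unproven extra claim that the paper neither makes nor needs.
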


Theorem \ref{T1} partially extend the main results in \cite{BI1,BI2,SM1}, because it show the existence of positive solution of \eqref{pq} for every $(\lambda,\mu)\in \Theta_{1}\backslash \left(\Lambda_{1}\cup \Lambda_{2}\right)$, where $\Upsilon \cap \left(\mathbb{R}^{2}\backslash \left(\Theta_{1}\cup \Gamma\right)\right)=\emptyset$ (see \eqref{up} for the definition of $\Upsilon$). In particular, it shows the existence of positive solution for $\hat{\lambda}_{1}<\lambda,\mu<\hat{\mu}_{1}$ and $\lambda<\hat{\lambda}_{1},\hat{\mu}_{1}<\mu$, too. Furthermore, Theorem \ref{T1} gives us an almost complete description of the set of parameters $(\lambda,\mu)$ such that \eqref{pq} admits a positive solution (see Figure. 2). In particular, we would like to point out that the curves $\Gamma$ and $\Gamma^{\ast}$ are distinct (see Figures 1 and 2).

\begin{center}
	\begin{tikzpicture}
	\draw[thick,->] (-2,0) -- (4.9,0);
	\draw[thick,->] (0,-2) -- (0,4.9);
	\draw[dashed] (-2,0.98) -- (1,0.98);
	\draw[dashed] (0.975,-2) -- (0.975,1);
	\draw[dashed] (4.1,-2) -- (4.1,0);
	\draw[dashed] (-2.1,4.04) -- (0,4.04);
	\draw (4,0) node[below right]{$\lambda_{\ast}$};
	\draw (0,3.9) node[above right]{$\mu_{\ast}$};

	\draw[fill=blue, draw=black,thick,dashed] (4.1,-2) .. controls (3,3) and (0,4)..(-2.1,4);
	\fill[blue] (-2.1,4) rectangle (-1.9,1.005);
	
	\fill[blue] (-1.9,3.8) rectangle (-1,1.005);

	\fill[blue] (-1,3.5) rectangle (0,1.005);
	
	\fill[blue] (-1,3) rectangle (1,1.005);
	
	\fill[blue] (1,0.95) rectangle (3,-2);
	
	\fill[blue] (4.088,-2) rectangle (1,-1.95);
	
	\fill[blue] (4.05,-1.95) rectangle (1,-1.9);
	
	\fill[blue] (3.987,-1.5) rectangle (1,-1.9);
	
	\fill[blue] (3.8,-0.9) rectangle (1,-1.9);
	
	\fill[blue] (-2.1,-2) rectangle (0.95,0.95);
	
	\fill[blue] (1,0.9) rectangle (2,2);
	
	\draw (4.9,0) node[below]{$\lambda$};
	\draw (0,4.8) node[left]{$\mu$};
	\draw (0.8,-1.9) node[below right]{$\Lambda_{2}$};
	\draw (-2,0.8) node[above left]{$\Lambda_{1}$};
	\draw (2.1,2.2) node[above ]{$\Gamma$};
	
	%\draw (-1,-1) node[below ]{$\Theta_{1}$};
	\draw (3,3) node[below ]{$\Theta_{2}$};
	\draw (-2.3,-1.5) node[below ]{$\Theta_{1}$};
	\draw (1,-2.4)node[below]{\textrm{Figure 2. $\Theta_{1}=$ blue region; $\Theta_{2}=\left\{(\lambda,\mu):\Gamma(t)<(\lambda,\mu)\right\}$}};
	\end{tikzpicture}
\end{center}

The next result deals with the existence and multiplicity of positive solutions of \eqref{pq} when $p=q=2$ and $\lambda=\mu$. 
\begin{theorem}\label{T2}
	Let $p=q=2$, $f\in C(\overline{\Omega})$, and assume that \eqref{pot} and $(f)_{1}-(f)_{2}$ hold. Then there exists $\tau>0$ such that
	\begin{itemize}
		\item[$a)$] for every $\lambda\in (\hat{\lambda}_{1},\tau)$, $(P_{\lambda,\lambda})$ admits at least two positive solutions.
		\item[$b)$] for $\lambda=\hat{\lambda}_{1}$ and $\lambda=\tau$, problem $(P_{\lambda,\lambda})$ admits at least one positive solution.
	\end{itemize} 
\end{theorem}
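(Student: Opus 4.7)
The natural candidate for $\tau$ is the unique value at which the extremal curve $\Gamma$ of Theorem \ref{T1} crosses the diagonal $\{(\lambda,\lambda):\lambda\in\mathbb{R}\}$. Since $p=q=2$ gives $\hat\lambda_1=\hat\mu_1$, the diagonal meets $\Lambda_1\cup\Lambda_2$ only at the corner $(\hat\lambda_1,\hat\lambda_1)$, so diagonal points with $\hat\lambda_1<\lambda<\tau$ lie in $\Theta_1\setminus(\Lambda_1\cup\Lambda_2)$ and Theorem \ref{T1} already produces one positive solution. Finiteness of $\tau$ and $\tau>\hat\lambda_1$ follow from the asymptotic information $\Gamma(t)\to(\lambda_\ast,-\infty)$ and $(-\infty,\mu_\ast)$ combined with the fact that the two coordinates of $\Gamma$ cannot diverge simultaneously.

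To obtain the second positive solution required in (a) I would follow a local--minimizer plus mountain--pass strategy. The first solution $(u_1,v_1)$ from Theorem \ref{T1} comes from a sub--super solution bracket $[(\underline u,\underline v),(\overline u,\overline v)]$; truncating the nonlinear term outside the order interval turns $(u_1,v_1)$ into a global minimum of a modified functional $\widetilde I_\sigma$. A strong comparison argument (as in \cite{AAS,CRMZ}) lifts this to a strict local minimum of $I_\sigma$ in the $C^1_0(\overline\Omega)\times C^1_0(\overline\Omega)$ topology, and then, by the Brezis--Nirenberg device, to a strict local minimum in $E$. Because of \eqref{pot} one has $I_\sigma(tu_1,tv_1)\to-\infty$ as $t\to\infty$, so the mountain--pass geometry is in place; the Palais--Smale condition is checked directly, using the subcritical growth $\alpha/p^{\ast}+\beta/q^{\ast}<1$ and the superhomogeneity $\alpha/p+\beta/q>1$ to control the energy of PS sequences. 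The resulting critical point $(u_2,v_2)$ has energy strictly above $(u_1,v_1)$, so it is distinct, and by a Moser iteration plus the strong maximum principle both components are strictly positive.

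For (b) I would produce the two endpoint solutions as limits. At $\lambda=\tau$: pick $\lambda_n\uparrow\tau$ with first solutions $(u_n,v_n)$ from (a), establish uniform $E$-- and $L^\infty$--bounds, and extract a weak limit which, by standard compactness in $E$ via \eqref{pot}, solves $(P_{\tau,\tau})$; the limit is nontrivial because $(u_n,v_n)$ stays above the uniform positive sub-solution used in Theorem \ref{T1}, and positive by the strong maximum principle. At $\lambda=\hat\lambda_1$ I would take $\lambda_n\downarrow\hat\lambda_1$ from the interior of the interval and pass to the limit in the same way; here $(f)_1$ is essential to rule out the degenerate scenario in which $(u_n,v_n)$ concentrates along $(t\phi_1,s\phi_1)$, since along such directions the energy $I_{\sigma_n}$ would eventually tend to $-\infty$ by $(f)_1$, contradicting the variational characterization of $(u_n,v_n)$ as a local minimum.

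The principal obstacle is the \emph{uniform a priori bound} in $E$ (and in $L^\infty$) for the families of solutions as $\lambda\to\tau^-$ and as $\lambda\to\hat\lambda_1^+$. I expect to obtain it by a Brezis--Kato/Moser iteration adapted to the $(p,q)$-system together with a blow--up/rescaling argument reducing non-uniform boundedness to a Liouville-type statement for a limit system in $\mathbb{R}^N$ or in a half-space, exactly in the spirit of the adaptations of \cite{AAS,CRMZ,CZ} announced in the introduction. Once this bound is in hand, the rest of the scheme above goes through in a standard fashion.
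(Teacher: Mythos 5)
Your proposal takes a completely different and much heavier route than the paper, and it contains genuine gaps. The paper's proof is a two-line reduction: by Alama--Tarantello \cite{AL} the scalar problem $-\Delta u=\lambda u+f(x)|u|^{\alpha+\beta-2}u$ already admits two positive solutions for $\lambda\in(\hat\lambda_1,\tau)$ and one at the endpoints, where $\tau$ is the Alama--Tarantello extremal value; given such a $u_\lambda$, the pair $(tu_\lambda,su_\lambda)$ with $t^{2-\alpha}s^{-\beta}=\alpha$ and $t^{-\alpha}s^{2-\beta}=\beta$ (solvable because $d=\tfrac{\alpha}{2}+\tfrac{\beta}{2}-1\neq0$) is a positive solution of $(P_{\lambda,\lambda})$, and distinct scalar solutions give distinct system solutions. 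All of the analytic difficulty is thus outsourced to \cite{AL}. You instead attempt to reprove an Alama--Tarantello-type multiplicity result directly for the system, which is precisely the hard part.

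The concrete gaps: (i) your mountain-pass geometry is not in place as claimed. Since $f$ changes sign, $I_\sigma(tu_1,tv_1)=t^2(\cdots)-t^{\alpha+\beta}F(u_1,v_1)$ tends to $-\infty$ only if $F(u_1,v_1)>0$, and there is no reason the first solution satisfies this; indeed $(f)_1$ asserts $F(\phi_1,\psi_1)<0$, so negativity of $F$ along natural directions is to be expected. One must instead build a path through functions supported in $\Omega^+$, and even then the Palais--Smale condition near the resonant value $\hat\lambda_1$ and near the extremal value $\tau$ is exactly the delicate point of \cite{AL}; asserting it is ``checked directly'' does not close it. (ii) The uniform a priori bound needed for both endpoint limits in (b) is only ``expected'' via a blow-up/Liouville scheme that is nowhere carried out, and the nontriviality of the limit at $\lambda=\hat\lambda_1$ (ruling out convergence to $0$ or to a semitrivial pair $(\epsilon\phi_1,0)$) is not actually controlled by the heuristic you give. (iii) Identifying your $\tau$ (the crossing of $\Gamma$ with the diagonal) with a value for which the \emph{endpoint} existence in (b) holds requires showing $\Gamma(0)\in\Upsilon$, which the paper explicitly states it does not know in general (see the remarks after Theorem \ref{T1}). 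As written, your argument establishes at most one solution on the diagonal below the crossing point; the second solution and both endpoint cases remain open.
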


This paper is organized as follows. In Section 2, we give some definitions and 
prove a sub-supersolution theorem. In Section 3 we study some properties of the set of parameters $(\lambda,\mu)$ such that \eqref{pq} admits  at least one positive solution, and then give the proof of Theorem \ref{T1}. In Section 4, we prove Theorem \ref{T2}.

{\it Notations.} Throughout this paper, we make use of the following notations.
\begin{itemize}
	\item The spaces $\mathbb{R}^{N}$ are equipped with the Euclidean norm $\sqrt{x^{2}_{1}+\cdot \cdot \cdot + x^{2}_{N}} $,
	\item $B_{r}(x)$ denotes the ball centered at $x \in \mathbb{R}^{N}$ with radius $r>0$,
	\item the notation $(a,b)>(c,d)$ means $a>c$ and $b>d$. Similarly, $(a,b)\geq(c,d)$ means $a\geq c$ and $b\geq d$ for all $(a,b),(c,d)\in \mathbb{R}^{2}$, 
	\item  $\displaystyle \lim_{|x|\to \infty}(u(x),v(x))=(\displaystyle \lim_{|x|\to \infty}u(x),\displaystyle \lim_{|x|\to \infty}v(x))$ for  functions $u,v:\mathbb{R}^{N}\longrightarrow \mathbb{R}$, 
	\item If $A$ is a measurable set in $\mathbb{R}^{N}$, we denote by $\mathcal{L}(A)$  the Lebesgue measure of $A$.
\end{itemize}

\section{Sub-supersolution theorem for $(\lambda,\mu)\notin ]-\infty,\hat{\lambda}_{1}]\times ]-\infty,\hat{\mu}_{1}]$}
In this section we will give some definitions and prove a sub-supersolution theorem
that will be essential to prove Theorem \ref{T1}.

Let us consider the space $C^{1}_{0}(\overline{\Omega})=\left\{u\in C^{1}(\overline{\Omega}): u=0~\mbox{on}~ \partial \Omega \right\}$ equipped with the norm $\Vert u \Vert_{C^{1}}=\displaystyle\max_{x\in \Omega}\vert u(x)\vert+\displaystyle\max_{x\in \Omega}\vert \nabla u(x)\vert$. If on $C^{1}_{0}(\overline{\Omega})$ we consider the pointwise partial ordering (i.e., $u\leq v$ if and only if $u(x)\leq v(x)$ for all $x\in \overline{\Omega}$), which is induced by the positive cone
$$C^{1}_{0}(\overline{\Omega})_{+}=\left\{u\in C^{1}_{0}(\overline{\Omega}): u\geq 0~\mbox{for all}~ x\in \Omega \right\},$$
then this cone has a nonempty interior given by
$$int ~C^{1}_{0}(\overline{\Omega})_{+}=\left\{u\in C^{1}_{0}(\overline{\Omega}): u> 0~\mbox{for all}~ x\in \Omega~\mbox{and}~\frac{\partial u}{\partial n}(x) <0~\mbox{for all}~ x\in \partial\Omega\right\},$$
where $n(x)$ is the outward unit normal vector to $\partial \Omega$
at the point $x\in \partial \Omega$. It is known that $\phi_{1},\psi_{1}\in int ~C^{1}_{0}(\overline{\Omega})_{+}$.

The next proposition has been proved in  Marano-Papageorgiou ( see \cite{MP}, Proposition 1).
\begin{proposition}\label{pro1}
	If $u\in int~C^{1}_{0}(\overline{\Omega})_{+}$ then to every $v\in C^{1}_{0}(\overline{\Omega})_{+}$ there corresponds $\epsilon_{v}>0$ such that $u-\epsilon_{v}v\in C^{1}_{0}(\overline{\Omega})_{+}$.
\end{proposition}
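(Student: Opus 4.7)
The plan is to show that the ratio $v/u$ is uniformly bounded on $\Omega$, so that any small multiple of $v$ is dominated by $u$ pointwise; choosing $\varepsilon_v$ correspondingly will then force $u-\varepsilon_v v$ to be a nonnegative $C^1_0(\overline{\Omega})$ function. The function $u-\varepsilon_v v$ is automatically in $C^1_0(\overline{\Omega})$ since both $u$ and $v$ are, so the entire task reduces to controlling the pointwise nonnegativity.

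First I would handle the interior. Fix $\delta>0$ small and set $\Omega_\delta=\{x\in\overline{\Omega}:\mathrm{dist}(x,\partial\Omega)\geq\delta\}$. Because $u\in int~C^1_0(\overline{\Omega})_+$, in particular $u>0$ in $\Omega$, continuity on the compact set $\Omega_\delta$ yields a constant $c_\delta>0$ with $u\geq c_\delta$ there. On the other hand $v\in C^1_0(\overline{\Omega})_+$ is bounded, so $v\leq M$ on $\Omega_\delta$, and choosing $\varepsilon\leq c_\delta/M$ ensures $u-\varepsilon v\geq 0$ on $\Omega_\delta$.

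Next I would handle the boundary strip $\Omega\setminus\Omega_\delta$, which is the delicate part because $u$ and $v$ both vanish on $\partial\Omega$. Let $d(x)=\mathrm{dist}(x,\partial\Omega)$. Since $\Omega$ is smooth and $\partial u/\partial n<0$ everywhere on $\partial\Omega$, by compactness of $\partial\Omega$ and continuity of $\nabla u$ there exists $\eta>0$ such that $\partial u/\partial n(x_0)\leq-\eta$ for every $x_0\in\partial\Omega$. Taking $\delta$ small enough that every $x$ in the strip has a unique nearest boundary point $x_0$ and using the fundamental theorem of calculus along the inward normal segment, together with continuity of $\nabla u$ up to $\partial\Omega$, I obtain $u(x)\geq (\eta/2)\,d(x)$ on $\Omega\setminus\Omega_\delta$. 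Analogously, since $v\in C^1_0(\overline{\Omega})_+$ with $v|_{\partial\Omega}=0$, the mean value theorem gives $v(x)\leq \|\nabla v\|_\infty\,d(x)$ throughout $\overline{\Omega}$. Combining the two estimates on the boundary strip produces a uniform bound $v(x)/u(x)\leq 2\|\nabla v\|_\infty/\eta$.

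Merging the interior and boundary estimates, I obtain a single constant $K>0$ with $v(x)\leq K\,u(x)$ for all $x\in\overline{\Omega}$. Setting $\varepsilon_v=1/K$ (or any smaller positive number) then gives $u-\varepsilon_v v\geq 0$ on $\overline{\Omega}$, which is exactly the statement $u-\varepsilon_v v\in C^1_0(\overline{\Omega})_+$. The main obstacle is the boundary analysis: one must exploit both the strict negativity of the outward normal derivative of $u$ (the hypothesis that places $u$ in the \emph{interior} of the cone) and the fact that an arbitrary element of $C^1_0(\overline{\Omega})_+$ vanishes at most linearly near $\partial\Omega$, so that the quotient $v/u$ does not blow up at the boundary. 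Once this quotient estimate is in hand, the conclusion is immediate.
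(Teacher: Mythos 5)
Your argument is correct: the splitting into a compact interior piece (where $u$ has a positive minimum and $v$ is bounded) and a tubular boundary strip (where the uniform bound $\partial u/\partial n\leq -\eta<0$ gives $u\gtrsim d(x)$ while the $C^1$ bound on $v$ gives $v\lesssim d(x)$) is exactly the standard way to establish the uniform quotient bound $v\leq Ku$, and every step you invoke (compactness of $\partial\Omega$, the tubular neighbourhood for small $\delta$, the fundamental theorem of calculus along the inward normal) is legitimate for a smooth bounded domain. Note that the paper itself offers no proof of this proposition --- it simply cites Proposition 1 of Marano--Papageorgiou \cite{MP} --- so there is nothing in the text to compare against; your write-up supplies the standard direct proof of that cited fact, and it is sound.
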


The following proposition concerning the regularity of the positive solutions will be useful in
the sequel. For proof  we refer to \cite{BI2}, Appendix 2.
%For the convenience of the reader, we state the following result. 
\begin{proposition}\label{pro2}
	Assume \eqref{pot} holds and let $(u,v)$ be a positive solution of \eqref{pq}. Then $u,v\in C^{1}_{0}(\overline{\Omega})$. Moreover, $u$ and $v$ satisfy a boundary point maximum principle on $\partial \Omega$.
\end{proposition}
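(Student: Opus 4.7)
My plan is to upgrade the regularity of a positive solution $(u,v)\in W^{1,p}_0(\Omega)\times W^{1,q}_0(\Omega)$ in two stages: first derive an $L^{\infty}$ bound via Moser iteration using the subcritical growth condition in \eqref{pot}, then apply standard $C^{1,\alpha}$ regularity for the $p$-Laplacian with bounded right-hand side, and finally invoke the $p$-Laplacian version of the Hopf boundary point lemma.

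\textbf{Step 1: $L^{\infty}$ boundedness.} The key observation is that the coupling in \eqref{pq} is through the product $|u|^\alpha|v|^\beta$. I would test the first equation of \eqref{pq} with $\varphi=\min(u,M)^{kp}u$ (truncated at level $M$) and the second with $\psi=\min(v,M)^{kq}v$, then let $M\to\infty$. Using Sobolev's inequality on the left together with H\"older's inequality on the nonlinearity, and summing the two resulting estimates, one obtains a recursive bound of the form
\begin{equation*}
\bigl\|u\bigr\|_{(k+1)p^{\ast}}^{(k+1)p}+\bigl\|v\bigr\|_{(k+1)q^{\ast}}^{(k+1)q}\leq C(k)\bigl(\|u\|_{?}+\|v\|_{?}\bigr),
\end{equation*}
where the mixed exponents are tractable precisely because $\alpha/p^{\ast}+\beta/q^{\ast}<1$. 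Iterating in $k$ (De Giorgi--Moser scheme for systems) yields $u,v\in L^{\infty}(\Omega)$. This is the step I expect to be the main technical obstacle: one must handle the coupling carefully, perhaps by interpolating $\|u\|_r^{\theta_1}\|v\|_s^{\theta_2}$ terms against the Sobolev norms on the left via Young's inequality so that the iteration closes.

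\textbf{Step 2: $C^{1,\alpha}$ regularity up to the boundary.} Once $u,v\in L^{\infty}(\Omega)$, the right-hand sides
\begin{equation*}
\lambda|u|^{p-2}u+\alpha f(x)|u|^{\alpha-2}|v|^{\beta}u\quad\text{and}\quad \mu|v|^{q-2}v+\beta f(x)|u|^{\alpha}|v|^{\beta-2}v
\end{equation*}
belong to $L^{\infty}(\Omega)$ (recall $f\in L^{\infty}(\Omega)$ and $\alpha\geq p$, $\beta\geq q$). The classical regularity results of DiBenedetto and Tolksdorf, extended by Lieberman to the boundary for smooth $\partial\Omega$, then give $u\in C^{1,\gamma}(\overline{\Omega})$ and $v\in C^{1,\gamma'}(\overline{\Omega})$ for some $\gamma,\gamma'\in(0,1)$. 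Because $u=v=0$ on $\partial\Omega$, we conclude $u,v\in C^{1}_{0}(\overline{\Omega})$.

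\textbf{Step 3: Boundary point maximum principle.} With $u,v\in L^{\infty}$, the first equation can be rewritten as
\begin{equation*}
-\Delta_p u + c(x) u^{p-1}\geq 0,\qquad c(x):=-\lambda-\alpha f(x)\,u^{\alpha-p}\,v^{\beta}\in L^{\infty}(\Omega),
\end{equation*}
with $u\geq 0$. Vazquez's strong maximum principle and Hopf-type boundary point lemma for the $p$-Laplacian then yield $u>0$ in $\Omega$ and $\partial u/\partial n<0$ on $\partial\Omega$. The identical argument applies to $v$. This places $u,v\in\operatorname{int}C^{1}_{0}(\overline{\Omega})_{+}$, which is the form in which the boundary point maximum principle will be used to combine with Proposition \ref{pro1} in the sub-supersolution arguments of the next section.
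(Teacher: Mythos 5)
The paper gives no proof of this proposition at all, deferring entirely to \cite{BI2}, Appendix 2; your three-stage bootstrap (Moser iteration exploiting $\frac{\alpha}{p^{\ast}}+\frac{\beta}{q^{\ast}}<1$ to close the $L^{\infty}$ estimate, DiBenedetto--Tolksdorf--Lieberman regularity to reach $C^{1,\gamma}(\overline{\Omega})$, and V\'azquez's strong maximum principle with the zero-order coefficient kept bounded via $\alpha\geq p$, $\beta\geq q$) is precisely the standard argument carried out in that reference. Your proposal is therefore correct and takes essentially the same route as the paper's cited source.
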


As a consequence of Proposition \ref{pro1} and Proposition \ref{pro2}, for any positive solution $(u,v)$ of \eqref{pq} one has $u\geq \epsilon \phi_{1}$, $v\geq \epsilon \psi_{1}$ in $\Omega$ for $\epsilon>0$ small enough.

Now we define the notion of sub-supersolution. 
\begin{definition}\label{def1} A pair $(\overline{u},\overline{v})\in E$  is said to be a supersolution  of \eqref{pq} if $\overline{u},\overline{v}\in int ~C^{1}_{0}(\overline{\Omega})_{+}$ and 
	\begin{align*}
	\displaystyle \int_{\Omega}\vert \nabla \overline{u}\vert^{p-2}\nabla \overline{u} \nabla \phi - \displaystyle \lambda \displaystyle \int_{\Omega}\vert \overline{u}\vert^{p-2}\overline{u}\phi-\alpha \displaystyle \int_{\Omega}f(x) \vert \overline{u}\vert^{\alpha-2}\vert \overline{v}\vert ^{\beta}\overline{u}\phi \geq 0,
	\end{align*}
	
	\begin{align*}
	\displaystyle \int_{\Omega}\vert \nabla \overline{v}\vert^{q-2} \nabla \overline{v} \nabla\psi- \mu \displaystyle \int_{\Omega} \vert  \overline{v}\vert^{q-2}\overline{v}\psi-\beta \displaystyle \int_{\Omega}f(x)\vert \overline{u}\vert ^{\alpha}\vert \overline{v}\vert^{\beta-2}\overline{v}\psi\geq 0
	\end{align*}
	for any $(\phi,\psi)\in E$ with $\phi,\psi \geq 0$ in $\Omega$.
\end{definition}

\begin{definition}\label{def2} A pair $(\underline{u},\underline{v})\in E$  is said to be a subsolution  of \eqref{pq} if 
	\begin{align*}
	\displaystyle \int_{\Omega}\vert \nabla \underline{u}\vert^{p-2}\nabla \underline{u} \nabla \phi - \displaystyle \lambda \displaystyle \int_{\Omega}\vert \underline{u}\vert^{p-2}\underline{u}\phi-\alpha \displaystyle \int_{\Omega}f(x) \vert \underline{u}\vert^{\alpha-2}\vert \underline{v}\vert ^{\beta}\underline{u}\phi \leq 0,
	\end{align*}
	
	\begin{align*}
	\displaystyle \int_{\Omega}\vert \nabla \underline{v}\vert^{q-2} \nabla \underline{v} \nabla\psi- \mu \displaystyle \int_{\Omega} \vert  \underline{v}\vert^{q-2}\underline{v}\psi-\beta \displaystyle \int_{\Omega}f(x)\vert \underline{u}\vert ^{\alpha}\vert \underline{v}\vert^{\beta-2}\underline{v}\psi\leq 0
	\end{align*}
	for any $(\phi,\psi)\in E$ with $\phi,\psi \geq 0$ in $\Omega$.
\end{definition}
\begin{remark}
	The functions $\underline{u}$ and $\underline{v}$ may not be in  $int ~C^{1}_{0}(\overline{\Omega})_{+}$.
\end{remark}

We introduce the notation $$X_{\hat{\lambda}_{1},\hat{\mu}_{1}}=\left\{(\lambda,\mu)\in \mathbb{R}^{2}: \lambda <\hat{\lambda}_{1}~\mbox{and}~ \mu <\hat{\mu}_{1} \right\}.$$

We are now ready to prove the main result of this section.
\begin{theorem}\label{sub-sup} Let $(\overline{\lambda},\overline{\mu})\notin \Lambda_{1}\cup \Lambda_{2}\cup X_{\hat{\lambda}_{1},\hat{\mu}_{1}}$. Assume \eqref{pot} holds and suppose
	that $(P_{\overline{\lambda},\overline{\mu}})$ has a supersolution. Then problem \eqref{pq} has a positive solution $(u,v)$ for each $\sigma=(\lambda,\mu)\in (-\infty,\overline{\lambda}]\times (-\infty,\overline{\mu}]
	\setminus \left(\Lambda_{1}\cup \Lambda_{2}\cup X_{\hat{\lambda}_{1},\hat{\mu}_{1}}\right)$. Moreover, one has $I_{\sigma}(u,v)<0$.
\end{theorem}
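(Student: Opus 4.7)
The plan is to combine a monotonicity observation for supersolutions with a tailored subsolution construction, then extract a solution in the resulting order interval by truncation and minimization; the energy bound $I_{\sigma}(u,v)<0$ will follow from the minimization by comparison with a test pair in the order interval. The first, easy step is to note that if $\sigma=(\lambda,\mu)$ lies in the target region then $(\overline u,\overline v)$ remains a supersolution of $(P_{\lambda,\mu})$: because $\lambda\le\overline\lambda$, $\mu\le\overline\mu$, $\overline u,\overline v>0$, and the test functions in Definition~\ref{def1} are nonnegative, replacing $\overline\lambda$ by $\lambda$ only strengthens each inequality (and symmetrically for $\mu$).

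The central step is to build a positive subsolution $(\underline u,\underline v)\in E$ with $\underline u\le\overline u$, $\underline v\le\overline v$. When $\lambda>\hat\lambda_{1}$ and $\mu>\hat\mu_{1}$, the ansatz $(\epsilon\phi_{1},\epsilon\psi_{1})$ with $\epsilon>0$ small works: using $-\Delta_{p}\phi_{1}=\hat\lambda_{1}\phi_{1}^{p-1}$ the subsolution inequality for the $u$-equation reduces to the pointwise comparison
\[
\epsilon^{p-1}(\hat\lambda_{1}-\lambda)\phi_{1}^{p-1}\;\le\;\alpha\epsilon^{\alpha+\beta-1}f\phi_{1}^{\alpha-1}\psi_{1}^{\beta},
\]
which is trivial where $f\ge 0$ and, where $f<0$, holds for $\epsilon$ small because the left-hand side is strictly negative and $\alpha+\beta-p>0$ makes the right-hand side uniformly small; a symmetric check handles the $v$-equation. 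In the remaining admissible cases outside $\Lambda_{1}\cup\Lambda_{2}\cup X_{\hat\lambda_{1},\hat\mu_{1}}$, where one of $\lambda,\mu$ is at or below the corresponding first eigenvalue while the other is strictly above, the global eigenfunction fails as a subsolution on $\Omega$; I would replace the offending component by $\epsilon\tilde\phi_{1}$ (respectively $\epsilon\tilde\psi_{1}$), the first Dirichlet eigenfunction of $-\Delta_{p}$ (resp.\ $-\Delta_{q}$) on the subdomain $\tilde\Omega=\mathrm{int}(\Omega^{0}\cup\Omega^{+})$ supplied by $(f)_{2}$, extended by zero to $\Omega$, exploiting that $f\ge 0$ on $\tilde\Omega$ so that the nonlinear term cooperates with the sign of the eigenvalue term. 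The relation $(\underline u,\underline v)\le(\overline u,\overline v)$ is then secured by shrinking $\epsilon$ once more, using Proposition~\ref{pro1} together with $\overline u,\overline v\in\mathrm{int}\,C^{1}_{0}(\overline\Omega)_{+}$ from Proposition~\ref{pro2}.

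With the ordered pair in place, I would conclude by a standard truncation/minimization scheme. Truncating the arguments of the nonlinearities in the Euler equations pointwise to $[\underline u,\overline u]\times[\underline v,\overline v]$ produces a modified functional $\tilde I_{\sigma}$ on $E$ that is coercive (coercivity of the raw $I_{\sigma}$ fails as soon as $\lambda>\hat\lambda_{1}$ or $\mu>\hat\mu_{1}$, but is restored because the truncated nonlinear term is $L^{\infty}$) and sequentially weakly lower semicontinuous. A global minimizer $(u,v)\in E$ therefore exists; testing its Euler equations against $(u-\overline u)^{+}$, $(\underline u-u)^{+}$ (and the $v$-analogs) and combining the strict monotonicity of $-\Delta_{p},-\Delta_{q}$ with Definitions~\ref{def1}--\ref{def2} forces $\underline u\le u\le\overline u$ and $\underline v\le v\le\overline v$, so the truncation is inactive and $(u,v)$ is a positive weak solution of \eqref{pq}. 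The strict bound $I_{\sigma}(u,v)<0$ then comes from minimality by comparison with a small rescaling of $(\phi_{1},\psi_{1})$ (or of the subdomain eigenfunctions) lying in the order interval: the leading-order terms are controlled by the sign structure imposed by $(f)_{1}$ together with the fact that $\sigma$ avoids the eigenvalue lines $\Lambda_{1}\cup\Lambda_{2}$.

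The principal obstacle I foresee is exactly the subsolution construction in the mixed cases, where $(\epsilon\phi_{1},\epsilon\psi_{1})$ no longer satisfies the inequality on $\Omega$ and one must pass to the subdomain $\tilde\Omega$ from $(f)_{2}$. Verifying simultaneously that the extension-by-zero belongs to $W^{1,p}_{0}(\Omega)\times W^{1,q}_{0}(\Omega)$, satisfies the integral form of Definition~\ref{def2} across the interface $\partial\tilde\Omega\cap\Omega$ where the candidate vanishes, and still lies below $(\overline u,\overline v)$ is the delicate technical point, and it is precisely here that the adaptations of \cite{AAS,CRMZ,CZ} advertised in the introduction will play their role.
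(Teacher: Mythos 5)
Your outline diverges from the paper at the one point where the real difficulty lies, and the divergence creates a genuine gap. You propose to run the order-interval method between a \emph{positive} subsolution and the given supersolution. In the case $\lambda>\hat{\lambda}_{1}$, $\mu>\hat{\mu}_{1}$ your pair $(\epsilon\phi_{1},\epsilon\psi_{1})$ does work (your pointwise inequality is correct since $\alpha+\beta>p$ and $\alpha\geq p$ keep the right-hand side uniformly $o(1)$ after dividing by $\epsilon^{p-1}\phi_{1}^{p-1}$). But in the mixed cases — which are precisely the new content of the theorem — the proposed repair fails: if, say, $\lambda\leq\hat{\lambda}_{1}$ and you take $\underline{u}=\epsilon\tilde{\phi}_{1}$ with $\tilde{\phi}_{1}$ the first eigenfunction on $\tilde{\Omega}$, then by domain monotonicity $\hat{\lambda}_{1}(\tilde{\Omega})>\hat{\lambda}_{1}\geq\lambda$, so $-\Delta_{p}(\epsilon\tilde{\phi}_{1})-\lambda(\epsilon\tilde{\phi}_{1})^{p-1}=(\hat{\lambda}_{1}(\tilde{\Omega})-\lambda)(\epsilon\tilde{\phi}_{1})^{p-1}>0$ on $\tilde{\Omega}$, while the cooperating term $\alpha f\underline{u}^{\alpha-1}\underline{v}^{\beta}$ vanishes identically on $\Omega^{0}\cap\tilde{\Omega}$, a set of positive measure by $(f)_{2}$. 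The subsolution inequality is therefore violated there for every $\epsilon>0$; no rescaling helps because both sides are $(p-1)$-homogeneous in $\epsilon$ on $\Omega^{0}$. A second, related problem is your comparison step: after truncation you need $H_{1}(x,\overline{u},t)\leq H_{1}(x,\overline{u},\overline{v})$ for $t\leq\overline{v}$ on the set $\{u>\overline{u}\}$, i.e. $f(x)(t^{\beta}-\overline{v}^{\beta})\leq 0$, which fails wherever $f<0$; the system is not cooperative, so the standard test-function argument with $(u-\overline{u})^{+}$ does not close.

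The paper avoids both obstacles by taking the \emph{trivial} subsolution $(0,0)$ for all admissible $(\lambda,\mu)$ and minimizing $I_{\sigma}$ directly over the closed convex set $M=\{(u,v):0\leq u\leq\overline{u},\,0\leq v\leq\overline{v}\}$ (coercivity is automatic there because $M$ is order-bounded, so no truncation is needed), then showing via Struwe's variational-inequality argument with the test pairs $(u+\epsilon\varphi-\overline{u})^{+}$, $(u+\epsilon\varphi)^{-}$ that the constrained minimizer is an unconstrained critical point — an argument that uses only the monotonicity of $-\Delta_{p}$, $-\Delta_{q}$ and the absolute continuity of the integral, not any cooperativity. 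Positivity is then recovered \emph{a posteriori}: $I_{\sigma}(u,v)\leq I_{\sigma}(\epsilon\phi_{1},0)<0$ (or $I_{\sigma}(0,\epsilon\psi_{1})<0$) rules out $(0,0)$; if one component vanished, the other would be a positive eigenfunction, forcing $\lambda=\hat{\lambda}_{1}$ or $\mu=\hat{\mu}_{1}$ and contradicting either the standing assumption or the strict negativity of the energy. This is exactly where the hypothesis $(\lambda,\mu)\notin\Lambda_{1}\cup\Lambda_{2}\cup X_{\hat{\lambda}_{1},\hat{\mu}_{1}}$ is consumed. If you want to salvage your route, you would have to either prove the semitrivial-exclusion step anyway (at which point the positive subsolution buys you nothing) or produce a strictly positive subsolution valid for $\lambda\leq\hat{\lambda}_{1}$, which the sign structure of $f$ on $\Omega^{0}$ appears to forbid.
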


\begin{proof} We first note that $(\underline{u},\underline{v})=(0,0)$ is a subsolution of \eqref{pq} for any $(\lambda,\mu)\in \mathbb{R}^{2}$. Let $(\overline{u}, \overline{v})$ be a supersolution of $(P_{\overline{\lambda},\overline{\mu}})$ and  $(\lambda,\mu)\in (-\infty,\overline{\lambda}]\times (-\infty,\overline{\mu}]
	\setminus \left(\Lambda_{1}\cup \Lambda_{2}\cup X_{\hat{\lambda}_{1},\hat{\mu}_{1}}\right)$. Clearly, $(\overline{u}, \overline{v})$ is a supersolution of \eqref{pq} with $\overline{u}>0$ and $\overline{v}>0$ in $\Omega$. The solution of \eqref{pq} will be obtained by minimizing the functional $I_{\sigma}$ $(\sigma=(\lambda,\mu))$ over the set
	$$M=\left\{(u,v)\in E:0\leq u\leq \overline{u},~ 0\leq v\leq\overline{v} \right\}.$$
	
	We first observe that $M$ is convex and closed with respect to the $E$ topology, hence weakly closed. Furthermore, for all $(u,v)\in M$ we have
	$$I_{\sigma}(u,v)\geq \frac{1}{p}\left(\Vert \nabla u \Vert^{p}-\lambda\vert \overline{u}\vert_{p}^{p}\right)+\frac{1}{q}\left(\Vert \nabla v \Vert^{q}-\lambda\vert \overline{v}\vert_{q}^{q}\right)-\displaystyle \int_{\Omega}\vert f(x)\vert \vert \overline{u}\vert^{\alpha}\vert \overline{v}\vert^{\beta},$$	
	which implies that 	$I_{\sigma}$ is coercive on $M$. 
	
	It is easy to check that $I_{\sigma}$ is weakly lower semicontinuous on $M$. Thus, $I_{\sigma}$ verifies the hypotheses of Theorem 1.2 in \cite{St}. According to this one, there exists a relative minimizer $(u,v)\in M$ of $I_{\sigma}$. We show in what follows that  $(u,v)$ is a positive solution of \eqref{pq}.
	
	In the sequel, to simplify the notation,
	we set $U=(u,v)$,
	$$H_{1}(x,s,t)=\lambda \vert s\vert^{p-2}s+\alpha f(x)\vert s\vert^{\alpha-2}\vert t\vert^{\beta}s~ \mbox{and}~H_{2}(x,s,t)=\mu \vert t\vert^{p-2}t+\beta f(x)\vert s\vert^{\alpha}\vert t\vert^{\beta-2}t. $$
	
	Let $ (\varphi,\psi)=\Psi \in E$, $\epsilon>0$, and consider
	$$w^{\epsilon}:=\left(u+\epsilon \varphi-\overline{u}\right)^{+}=\max \left\{0,u+\epsilon \varphi-\overline{u}\right\},$$
	$$w_{\epsilon}:=\left(u+\epsilon \varphi\right)^{-}=\max \left\{0,-(u+\epsilon \varphi)\right\},$$
	$$ z^{\epsilon}:=\left(v+\epsilon \psi-\overline{v}\right)^{+}=\max \left\{0,v+\epsilon \psi-\overline{v}\right\},$$
	and
	$$z_{\epsilon}:=\left(v+\epsilon \psi\right)^{-}=\max \left\{0,-(v+\epsilon \psi)\right\}.$$
	Set $\eta_{\epsilon}:=u+\epsilon \varphi-w^{\epsilon}+w_{\epsilon}~\mbox{and}~\nu_{\epsilon}:=v+\epsilon \psi-z^{\epsilon}+z_{\epsilon}.$ Then $U_{\epsilon}:=(\eta_{\epsilon},\nu_{\epsilon})=U+\epsilon \Psi-(w^{\epsilon},z^{\epsilon})+(w_{\epsilon},z_{\epsilon})\in M,$ and by the
	convexity of $M$
	we get $U+t\left(U_{\epsilon}-U\right)\in M$, for all $0<t<1$. Since $U$ minimizes $I_{\sigma}$ in $M$, this yields
	\begin{align*} 
	&0\leq \langle I_{\sigma}^{\prime}(U),\left(U_{\epsilon}-U \right) \rangle
	=\epsilon \langle I_{\sigma}^{\prime}(U),\left(\varphi,\psi \right) \rangle -\langle I_{\sigma}^{\prime}(U),\left(w^{\epsilon},z^{\epsilon}\right) \rangle +\langle I_{\sigma}^{\prime}(U),\left(w_{\epsilon},z_{\epsilon}\right) \rangle,
	\end{align*}
	so that
	\begin{equation}\label{3121}
	\langle I_{\sigma}^{\prime}(U),\left(\varphi,\psi \right) \rangle\geq \frac{1}{\epsilon}\left[\langle I_{\sigma}^{\prime}(U),\left(w^{\epsilon},z^{\epsilon}\right) \rangle -\langle I_{\sigma}^{\prime}(U),\left(w_{\epsilon},z_{\epsilon}\right) \rangle\right].
	\end{equation}
	
	Now, since $\overline{U}=(\overline{u},\overline{v})$ is a supersolution to \eqref{pq}, we have
	\begin{align*}
	\langle I_{\sigma}^{\prime}(U),\left(w^{\epsilon},z^{\epsilon}\right) \rangle =&\langle I_{\sigma}^{\prime}(\overline{U}),\left(w^{\epsilon},z^{\epsilon}\right)\rangle+\langle I_{\sigma}^{\prime}(U)-I_{\sigma}^{\prime}(\overline{U}),\left(w^{\epsilon},z^{\epsilon}\right) \rangle\\
	&\geq  \langle I_{\sigma}^{\prime}(U)-I_{\sigma}^{\prime}(\overline{U}),\left(w^{\epsilon},z^{\epsilon}\right) \rangle \\
	&= \displaystyle \int_{\Omega_{\epsilon}}\left(\vert \nabla u\vert^{p-2}\nabla u-\vert \nabla \overline{u}\vert^{p-2}\nabla \overline{u} \right) \nabla \left(u+\epsilon \varphi-\overline{u}\right)\\
	&-\displaystyle \int_{\Omega_{\epsilon}}\left[H_{1}(x,u,v)-H_{1}(x,\overline{u},\overline{v})\right]\left(u+\epsilon \varphi-\overline{u}\right)\\
	&+  \displaystyle \int_{\Omega^{\epsilon}} \left(\vert \nabla v\vert^{q-2}\nabla v-\vert \nabla \overline{v}\vert^{q-2}\nabla \overline{v} \right)\nabla \left(v+\epsilon \psi-\overline{v}\right)\\
	&-\displaystyle \int_{\Omega^{\epsilon}}\left[H_{2}(x,u,v)-H_{2}(x,\overline{u},\overline{v})\right] \left(v+\epsilon \psi-\overline{v}\right)  \\
	&\geq  \epsilon \displaystyle \int_{\Omega_{\epsilon}}\left(\vert \nabla u\vert^{p-2}\nabla u-\vert \nabla \overline{u}\vert^{p-2}\nabla \overline{u} \right) \nabla \varphi \\
	&-\epsilon \displaystyle \int_{\Omega_{\epsilon}}\vert H_{1}(x,u,v)-H_{1}(x,\overline{u},\overline{v})\vert \vert \varphi \vert \\
	& +\epsilon \displaystyle \int_{\Omega^{\epsilon}}\left(\vert \nabla v\vert^{q-2}\nabla v-\vert \nabla \overline{v}\vert^{q-2}\nabla \overline{v} \right) \nabla \psi \\
	&-\epsilon \displaystyle \int_{\Omega^{\epsilon}}\vert H_{2}(x,u,v)-H_{2}(x,\overline{u},\overline{v})\vert \vert \psi \vert ,
	\end{align*}
	where we have used the monotonicity of $-\Delta_{p}, -\Delta_{q}$ and  
	$$\Omega_{\epsilon}=\left\{x\in \mathbb{R}^{N}:u+\epsilon \varphi\geq \overline{u}> u \right\}~\mbox{and}~\Omega^{\epsilon}=\left\{x\in \mathbb{R}^{N}:v+\epsilon \psi\geq \overline{v}> v \right\}.$$
	
	Note that $\mathcal{L}(\Omega_{\epsilon})\to 0$ and $\mathcal{L}(\Omega^{\epsilon})\to 0$ as $\epsilon \to 0$. Hence, by absolute continuity of the Lebesgue integral, one has
	\begin{equation}\label{3122}
	\frac{\langle I_{\sigma}^{\prime}(U),\left(w^{\epsilon},z^{\epsilon}\right) \rangle}{\epsilon}\geq o(\epsilon), ~\mbox{where}~o(\epsilon)\to 0~\mbox{as}~\epsilon \to 0.
	\end{equation}
	
	By using that $(0,0)$ is a subsolution and following similar arguments as done in the proof of \eqref{3122}, we get
	\begin{equation}\label{3c123}
	\frac{\langle I_{\sigma}^{\prime}(U),\left(w_{\epsilon},z_{\epsilon}\right) \rangle}{\epsilon}\leq  o(\epsilon), ~\mbox{where}~o(\epsilon)\to 0~\mbox{as}~\epsilon \to \infty,
	\end{equation}
	Putting together \eqref{3121}, \eqref{3122} and \eqref{3c123} we deduce $\langle I_{\sigma}^{\prime}(U),\Psi \rangle\geq 0$,
	for all $\Psi=\left(\varphi,\psi \right)\in  E$. Reversing the sign of $\Psi$ we find $\langle I_{\sigma}^{\prime}(U),\Psi \rangle =0$, for all $\Psi \in E$, that is, $U=(u,v)$ is a weak solution of \eqref{pq}.
	
	We claim that $(u,v)$ is a positive solution of \eqref{pq}. Indeed, one has $u,v\geq 0$ in $\Omega$ and by the assumption either $\lambda>\hat{\lambda}_{1}$ or $\mu>\hat{\mu}_{1}$. Without loss of generality we can assume $\lambda>\hat{\lambda}_{1}$ (the proof in the second case is similar). From Proposition \ref{pro1} and Proposition \ref{pro2} there exists $\epsilon>0$ such that  $(\epsilon \varphi_{1},0)\in M$, and consequently
	$$I_{\sigma}(u,v)\leq I_{\sigma}(\epsilon \varphi_{1},0)=\frac{1}{p}\left(\Vert \epsilon\varphi_{1}\Vert^{p} - \lambda \vert\epsilon\varphi_{1}\vert^{p}_{p}\right)<\frac{1}{p}\left(\Vert \epsilon\varphi_{1}\Vert^{p} - \hat{\lambda}_{1}\vert\epsilon\varphi_{1}\vert^{p}_{p}\right)=0,$$
	that is, $(u,v)\neq (0,0)$.
	
	We show in what follows that $v\neq 0$.  Assume the contrary, namely $v=0$. Since $(u,v)\neq (0,0)$, we obtain $u\geq 0$ in $\Omega$, $u\neq 0$ and
	$$\displaystyle \int_{\Omega_{\epsilon}}\vert \nabla u\vert^{p-2}\nabla u \nabla \varphi=\lambda\vert u\vert^{p-2}u\varphi,$$	
	for every $\varphi \in W^{1,p}_{0}(\Omega)$. By standard regularity  arguments and the strong maximum principle, we deduce that $u\in C^{1}_{0}(\overline{\Omega})$ and $u>0$ in $\Omega$. Hence $\lambda=\hat{\lambda}_{1}$ (due to Theorem 5.1 in \cite{L}), but this clearly contradicts the assumption $\lambda>\hat{\lambda}_{1}$.  
	
	Finally we prove that $u\neq 0$. The following two cases may occur:\\
	CASE 1: $\mu\neq \hat{\mu}_{1}$.\\
	CASE 2: $\mu=\hat{\mu}_{1}$.
	
	The former follows as done above to reach $v\neq 0$. For the latter, assume by contradiction that $u=0$. Because $(0,v)$ is a weak solution of $(P_{\lambda,\hat{\mu}_{1}})$ and $v\neq 0$, we deduce $v=\epsilon \psi_{1}$ for some $\epsilon>0$. Thus $$0=I_{\sigma}(0,\epsilon \psi_{1})=I_{\sigma}(0,v)<0,$$
	which is a contradiction.
	
	Therefore, $u\neq 0$ and $v\neq 0$. From Proposition \ref{pro1} and Proposition \ref{pro2} one has $u,v\in int ~C^{1}_{0}(\overline{\Omega})_{+}$, that is, $(u,v)$ is a positive solution of \eqref{pq}. This finishes the proof.
	
\end{proof}

\begin{remark} Let us point out that to get $u>0, v>0$ in $\Omega$ in Theorem \ref{sub-sup} it was essential that $(\overline{\lambda},\overline{\mu})\notin \Lambda_{1}\cup \Lambda_{2}\cup X_{\hat{\lambda}_{1},\hat{\mu}_{1}}$ and $\sigma=(\lambda,\mu)\in (-\infty,\overline{\lambda}]\times (-\infty,\overline{\mu}]
	\setminus \left(\Lambda_{1}\cup \Lambda_{2}\cup X_{\hat{\lambda}_{1},\hat{\mu}_{1}}\right)$. Indeed, for $(\lambda,\mu)$ outside of this set we are not able to distinguish among the relative minimizer $(u,v)\in M$ of $I_{\sigma}$ and the weak solutions $(0,0), (\epsilon \phi_{1},0),\epsilon>0$ and $(0,\epsilon \psi_{1}),\epsilon>0$, in the cases where $(\lambda,\mu)$ belongs to $X_{\hat{\lambda}_{1},\hat{\mu}_{1}}, \Lambda_{2}$ and $\Lambda_{1}$, respectively.
	
\end{remark}

Since every positive solution of \eqref{pq} is also a supersolution, the following result holds true.
\begin{corollary}\label{corsub-sup}
	Let $(\overline{\lambda},\overline{\mu})\notin \Lambda_{1}\cup \Lambda_{2}\cup X_{\hat{\lambda}_{1},\hat{\mu}_{1}}$. Assume \eqref{pot} holds and suppose
	that  $(P_{\overline{\lambda},\overline{\mu}})$ has a positive solution. Then problem \eqref{pq} has a positive solution $(u,v)$ for each $\sigma=(\lambda,\mu)\in (-\infty,\overline{\lambda}]\times (-\infty,\overline{\mu}]
	\setminus \left(\Lambda_{1}\cup \Lambda_{2}\cup X_{\hat{\lambda}_{1},\hat{\mu}_{1}}\right)$. Moreover, one has $I_{\sigma}(u,v)<0$.
\end{corollary}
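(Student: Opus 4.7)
The plan is to observe that the corollary is an immediate consequence of Theorem \ref{sub-sup} once we verify that a positive solution of $(P_{\overline{\lambda},\overline{\mu}})$ qualifies as a supersolution in the sense of Definition \ref{def1}. Then the conclusion, including the energy bound $I_{\sigma}(u,v)<0$, follows by direct invocation of Theorem \ref{sub-sup}.

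More concretely, I would first let $(\overline{u},\overline{v})$ be the positive solution of $(P_{\overline{\lambda},\overline{\mu}})$ provided by the hypothesis. By Proposition \ref{pro2}, we have $\overline{u},\overline{v}\in C^{1}_{0}(\overline{\Omega})$, they are strictly positive in $\Omega$, and they obey the boundary point maximum principle. Combining this with Proposition \ref{pro1} (as already noted just after the statement of Proposition \ref{pro2} in the paper) yields $\overline{u},\overline{v}\in \operatorname{int} C^{1}_{0}(\overline{\Omega})_{+}$, which is the regularity/positivity requirement in Definition \ref{def1}.

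Next, since $(\overline{u},\overline{v})$ is a weak solution of $(P_{\overline{\lambda},\overline{\mu}})$, the two integral identities defining a weak solution hold as equalities for all test pairs in $E$. Restricting to nonnegative test functions $(\phi,\psi)\in E$ with $\phi,\psi\geq 0$ in $\Omega$, the equalities become (trivially) the required inequalities ``$\geq 0$'' in Definition \ref{def1}. Hence $(\overline{u},\overline{v})$ is a supersolution of $(P_{\overline{\lambda},\overline{\mu}})$.

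Finally, since the hypothesis $(\overline{\lambda},\overline{\mu})\notin \Lambda_{1}\cup \Lambda_{2}\cup X_{\hat{\lambda}_{1},\hat{\mu}_{1}}$ is exactly the standing assumption of Theorem \ref{sub-sup}, we apply that theorem to obtain, for every $\sigma=(\lambda,\mu)\in (-\infty,\overline{\lambda}]\times (-\infty,\overline{\mu}]\setminus (\Lambda_{1}\cup \Lambda_{2}\cup X_{\hat{\lambda}_{1},\hat{\mu}_{1}})$, a positive solution $(u,v)$ of \eqref{pq} with $I_{\sigma}(u,v)<0$, finishing the proof. There is essentially no obstacle here; the only point worth stating carefully is the passage from ``weak solution'' to ``supersolution in the sense of Definition \ref{def1},'' which requires the membership in $\operatorname{int} C^{1}_{0}(\overline{\Omega})_{+}$ and which is supplied by Propositions \ref{pro1}--\ref{pro2}.
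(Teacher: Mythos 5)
Your proposal is correct and matches the paper's own argument, which simply notes that every positive solution of \eqref{pq} is a supersolution and then invokes Theorem \ref{sub-sup}; you have merely spelled out the verification (regularity via Propositions \ref{pro1}--\ref{pro2} and the equalities becoming inequalities for nonnegative test functions) that the paper leaves implicit.
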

%\section{Mountain Pass Theorem}

%Let $\sigma^{1}=(\hat{\lambda}_{1},\hat{\mu}_{1})$.

\section{Proof of Theorem \ref{T1}}
Let us denote by
\begin{equation}\label{up}
\Upsilon=\left\{(\lambda,\mu)\in \mathbb{R}^{2}:\eqref{pq}~\mbox{admits  at least one positive solution}\right\},
\end{equation}
\begin{equation*} ~~~~~~
\begin{array}{ll}
\overline{\Upsilon}:=~\mbox{the closure of}~\Upsilon,\\
\nonumber int(\Upsilon):=~\mbox{the interior of}~\Upsilon,\\
\partial (int(\Upsilon)):=~\mbox{the boundary of}~int(\Upsilon),\\
d(int(\Upsilon)):=~\mbox{the derived set of}~int(\Upsilon),\\
\overline{int(\Upsilon)}:=~\mbox{the closure of}~int(\Upsilon).
\end{array}
\end{equation*}

We will use the symbols $\hat{\lambda}_{1}(\tilde{\Omega})$ and $\hat{\mu}_{1}(\tilde{\Omega})$ for the first eigenvalues of the operators $-\Delta_{p}$ and $-\Delta_{q}$ in $\tilde{\Omega}$ with zero boundary conditions, respectively.  From now on we assume \eqref{pot} and $(f)_{1}-(f)_{2}$ hold.

The main purpose of this section is to study the properties of the set $\Upsilon$ and prove Theorem \ref{T1}. From \cite{BM,BI1,BI2,SM1} we deduce the following properties:
\begin{itemize}
	\item[$(\Upsilon)_{1}$] $X_{\hat{\lambda}_{1},\hat{\mu}_{1}} \subset int(\Upsilon)$, 
	\item[$(\Upsilon)_{2}$] there exists $\epsilon>0$ such that $]\hat{\lambda}_{1},\hat{\lambda}_{1}+\epsilon[\times]\hat{\mu}_{1},\hat{\mu}_{1}+\epsilon[\subset int(\Upsilon)$, 
	\item[$(\Upsilon)_{3}$] as a consequence of $(\Upsilon)_{1}-(\Upsilon)_{2}$ one has $\Upsilon\neq \emptyset, int(\Upsilon)\neq \emptyset$ and $int(\Upsilon) \cap \left\{(\lambda,\mu)\in \mathbb{R}^{2}:\hat{\lambda}_{1}<\lambda ~\mbox{and}~\hat{\mu}_{1}<\mu\right\}\neq \emptyset$.
\end{itemize}

Now, we find an upper bound for $\Upsilon$. 

\begin{lemma}\label{l0}
	If \eqref{pq} has a positive solution, then $\lambda \leq \hat{\lambda}_{1}(\tilde{\Omega})$ and $\mu \leq \hat{\mu}_{1}(\tilde{\Omega})$.
\end{lemma}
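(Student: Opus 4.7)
\textbf{Proof plan for Lemma \ref{l0}.} The plan is to use a Picone-type argument on the subdomain $\tilde{\Omega}$, exploiting the fact that $f\ge 0$ on $\tilde{\Omega}=\mathrm{int}(\Omega^0\cup\Omega^+)$. Suppose $(u,v)$ is a positive solution of \eqref{pq}. Since $\tilde{\Omega}\subset\Omega$ and $f\ge 0$ in $\tilde{\Omega}$, the first equation yields the pointwise differential inequality, valid in the weak sense on $\tilde{\Omega}$,
\begin{equation*}
-\Delta_{p}u=\lambda u^{p-1}+\alpha f(x)u^{\alpha-1}v^{\beta}\ge \lambda u^{p-1}.
\end{equation*}
I will then compare $u$ with the first Dirichlet eigenfunction $\phi_{1}(\tilde{\Omega})\in \mathrm{int}\,C^{1}_{0}(\overline{\tilde{\Omega}})_{+}$ of $-\Delta_{p}$ on $\tilde{\Omega}$, normalized by $\int_{\tilde{\Omega}}\phi_{1}(\tilde{\Omega})^{p}=1$, so that $\int_{\tilde{\Omega}}|\nabla\phi_{1}(\tilde{\Omega})|^{p}=\hat{\lambda}_{1}(\tilde{\Omega})$. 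Extending $\phi_{1}(\tilde{\Omega})$ by zero to $\Omega$ gives an element of $W^{1,p}_{0}(\Omega)$.

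The core step is a standard Picone-type estimate. For $\epsilon>0$, the function $\phi_{1}(\tilde{\Omega})^{p}/(u+\epsilon)^{p-1}$ belongs to $W^{1,p}_{0}(\Omega)$ (this is where Proposition \ref{pro2} is invoked: $u\in \mathrm{int}\,C^{1}_{0}(\overline{\Omega})_{+}$, so $u$ is comparable to the distance to $\partial\Omega$ near the boundary, making the quotient admissible). Testing the first equation of \eqref{pq} against this quotient and combining with the Allegretto--Huang form of Picone's inequality
\begin{equation*}
\int_{\tilde{\Omega}}|\nabla\phi_{1}(\tilde{\Omega})|^{p}\ge \int_{\tilde{\Omega}}|\nabla u|^{p-2}\nabla u\cdot \nabla\!\left(\frac{\phi_{1}(\tilde{\Omega})^{p}}{(u+\epsilon)^{p-1}}\right),
\end{equation*}
one obtains, after using $f\ge 0$ in $\tilde{\Omega}$ and dropping the nonnegative $f$-term,
\begin{equation*}
\hat{\lambda}_{1}(\tilde{\Omega})\ge \int_{\tilde{\Omega}}\lambda\,\frac{u^{p-1}}{(u+\epsilon)^{p-1}}\phi_{1}(\tilde{\Omega})^{p}.
\end{equation*}
Letting $\epsilon\to 0^{+}$ and invoking dominated convergence gives $\hat{\lambda}_{1}(\tilde{\Omega})\ge \lambda$. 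The argument for $\mu\le \hat{\mu}_{1}(\tilde{\Omega})$ is entirely symmetric, applied to the second equation with $\psi_{1}(\tilde{\Omega})$.

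The main obstacle I anticipate is the rigorous justification of the test function $\phi_{1}(\tilde{\Omega})^{p}/(u+\epsilon)^{p-1}$ and the passage $\epsilon\to 0$: the boundary $\partial\tilde{\Omega}$ may touch $\partial\Omega$, where both $u$ and $\phi_{1}(\tilde{\Omega})$ vanish simultaneously, so admissibility of the quotient and integrability of its gradient must be verified via the Hopf-type boundary estimate from Proposition \ref{pro2} (giving $u\ge c\,d(\cdot,\partial\Omega)$) together with the analogous lower bound for $\phi_{1}(\tilde{\Omega})$ on $\tilde{\Omega}$. Once this technical point is handled, the rest is the standard Picone computation and has nothing to do with the sign of $\lambda$: if $\lambda\le 0$ the conclusion $\lambda\le \hat{\lambda}_{1}(\tilde{\Omega})$ is trivial, and if $\lambda>0$ the above chain of inequalities is sharp enough to yield the claim.
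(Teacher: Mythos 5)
Your proposal is correct and follows essentially the same route as the paper: Picone's identity of Allegretto--Huang applied on $\tilde{\Omega}$, where $(f)_{2}$ gives $f\geq 0$ so the coupling term can be dropped, yielding $\int_{\tilde{\Omega}}\vert\nabla\varphi\vert^{p}\geq\lambda\int_{\tilde{\Omega}}\varphi^{p}$ and hence $\lambda\leq\hat{\lambda}_{1}(\tilde{\Omega})$. The only (harmless) difference is that the paper tests with arbitrary $\varphi\in C^{\infty}_{0}(\tilde{\Omega})$ and takes the infimum --- which sidesteps your boundary concern entirely, since $u$ is bounded below on the compact support of $\varphi$ --- whereas you plug in the eigenfunction directly and regularize with $u+\epsilon$, which also works.
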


\begin{proof} Suppose that \eqref{pq} admits a positive solution $(u,v)$ and let $\varphi\in C^{\infty}_{0}(\tilde{\Omega})$ with $\varphi \geq 0$. From Proposition \ref{pro2} we have $u,v\in int ~C^{1}_{0}(\overline{\Omega})_{+} $. Hence, by Picone's identity (see \cite{AH}),
	$$\displaystyle \int_{\Omega} \vert \nabla \varphi \vert^{p}-\displaystyle \int_{\Omega}  \vert \nabla u \vert^{p-2}\nabla u\nabla \left(\varphi^{p}/u^{p-1}\right)\geq 0  $$
	and consequently, from the equation satisfied by $u$ and $(f)_{2}$,
	\begin{align*}
	\displaystyle \int_{\tilde{\Omega}} \vert \nabla \varphi \vert^{p}=\displaystyle \int_{\Omega} \vert \nabla \varphi \vert^{p}&\geq \displaystyle \int_{\Omega}\left(\lambda \vert u\vert^{p-2}u+\alpha f(x) \vert u\vert^{\alpha-2}\vert v\vert  ^{\beta}u\right)\varphi^{p}/u^{p-1}\\
	&= \displaystyle \int_{\tilde{\Omega}}\left(\lambda  u^{p-1}+\alpha f(x) \vert u\vert^{\alpha-2}\vert v\vert  ^{\beta}u\right)\varphi^{p}/u^{p-1}\\
	&\geq \lambda \displaystyle \int_{\tilde{\Omega}}  u^{p-1}\varphi^{p}/u^{p-1},
	\end{align*}
	that is
	$$\displaystyle \int_{\tilde{\Omega}} \vert \nabla \varphi \vert^{p}\geq \lambda \displaystyle \int_{\tilde{\Omega}} \varphi^{p}.$$
	Taking the infimum with respect to $\varphi$ yields $\hat{\lambda}_{1}(\tilde{\Omega})\geq \lambda$. Similarly, one can show that $\mu \leq \hat{\lambda}_{1}\left(\tilde{\Omega}\right)$. The proof of the lemma is complete.
\end{proof}

As a consequence of Lemma \ref{l0} we get
$$int(\Upsilon)\subset \Upsilon \subset (-\infty,\hat{\lambda}_{1}(\tilde{\Omega})]\times (-\infty,\hat{\mu}_{1}(\tilde{\Omega})].$$

Next, we define a family of straight lines
$$
L(t)=\left\{(\lambda,\lambda-t):\lambda \in \mathbb{R}\right\},~t\in \mathbb{R}
$$
and
$$
\lambda(t)=\sup \left\{\lambda :(\lambda,\lambda-t)\in  \overline{int(\Upsilon)}\right\},~\mu(t)=\lambda(t)-t~\mbox{and}~\Gamma(t)=(\lambda(t),\mu(t)).
$$

By Lemma \ref{l0}  one has the estimates 
\begin{align*} 
\left\{ \begin{array}{c}
\lambda(t)\leq \hat{\lambda}_{1}(\tilde{\Omega}),~ \mbox{for every}~ t\in \mathbb{R},\\
\mu(t)\leq \hat{\mu}_{1}(\tilde{\Omega}),~\mbox{for every}~t\in \mathbb{R},\\
\mu(t)\leq \hat{\lambda}_{1}(\tilde{\Omega})-t,~\mbox{for every}~t\in \mathbb{R}.
\end{array}
\right.&
\end{align*}

Thus, the functions $\lambda(t)$ and $\mu(t)$ are well defined, and consequently  $\Gamma(t)$ is well defined.

In order to apply Corollary \ref{corsub-sup} and Theorem \ref{sub-sup}, we will obtain estimates from below for $\Gamma$ in the next lemma.
\begin{lemma}\label{ll1} There exists
	$\theta\in \mathbb{R}$ such that $\lambda(t)>\hat{\lambda}_{1}$ for $t>\theta$ and $\mu(t)>\hat{\mu}_{1}$ for $t\leq\theta$.
\end{lemma}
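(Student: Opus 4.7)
The plan is to take $\theta := \hat{\lambda}_{1}-\hat{\mu}_{1}$ and, for each $t$, exhibit an explicit point on the line $L(t)$ that lies in $int(\Upsilon)$ and whose coordinates force the desired bound on $\lambda(t)$ or $\mu(t)$. The two ingredients driving this are the local information $(\Upsilon)_{2}$ near $(\hat{\lambda}_{1},\hat{\mu}_{1})$ and the monotone propagation provided by Corollary \ref{corsub-sup}.

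First I would use $(\Upsilon)_{2}$ to fix $\delta>0$ with $(\hat{\lambda}_{1}+2\delta,\hat{\mu}_{1}+2\delta)\in int(\Upsilon)$. Since both coordinates of this anchor strictly exceed the respective eigenvalues, it lies outside $\Lambda_{1}\cup\Lambda_{2}\cup X_{\hat{\lambda}_{1},\hat{\mu}_{1}}$, and Corollary \ref{corsub-sup} then delivers
$$R:=\bigl((-\infty,\hat{\lambda}_{1}+2\delta]\times(-\infty,\hat{\mu}_{1}+2\delta]\bigr)\setminus\bigl(\Lambda_{1}\cup\Lambda_{2}\cup X_{\hat{\lambda}_{1},\hat{\mu}_{1}}\bigr)\subset\Upsilon.$$
Since $\Lambda_{1}\cup\Lambda_{2}\cup X_{\hat{\lambda}_{1},\hat{\mu}_{1}}$ is exactly the closed quadrant $\{\lambda\le\hat{\lambda}_{1},\,\mu\le\hat{\mu}_{1}\}$, any point in $R$ kept a uniform positive distance both from this quadrant and from the outer edge of the rectangle automatically belongs to $int(\Upsilon)$.

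For the case $t>\theta$ I would test the point $P_{t}:=(\hat{\lambda}_{1}+\delta,\hat{\lambda}_{1}+\delta-t)\in L(t)$; the hypothesis $t>\theta$ forces the second coordinate to be strictly below $\hat{\mu}_{1}+\delta$, while the first is $\hat{\lambda}_{1}+\delta$, so $P_{t}$ sits in the rectangle at distance at least $\delta$ from the excluded quadrant and from the line $\mu=\hat{\mu}_{1}+2\delta$. This yields $P_{t}\in int(\Upsilon)\subset\overline{int(\Upsilon)}$ and hence $\lambda(t)\ge\hat{\lambda}_{1}+\delta>\hat{\lambda}_{1}$. Symmetrically, for $t\le\theta$ I would test $Q_{t}:=(\hat{\mu}_{1}+\delta+t,\hat{\mu}_{1}+\delta)\in L(t)$; the inequality $t\le\theta$ gives $\hat{\mu}_{1}+\delta+t\le\hat{\lambda}_{1}+\delta$, and the identical separation argument produces $Q_{t}\in int(\Upsilon)$, so $\mu(t)=\lambda(t)-t\ge\hat{\mu}_{1}+\delta>\hat{\mu}_{1}$.

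The only slightly delicate step is the passage from ``$R\subset\Upsilon$'' to ``$P_{t},Q_{t}\in int(\Upsilon)$'': this is where the factor $2\delta$ in the anchor (rather than just $\delta$ in the test points) pays off, since a ball of radius $\delta$ around each test point is then entirely contained in $R$, and no ``corner'' of $R$ ever comes into play. Everything else reduces to locating $L(t)$ with respect to this rectangle, which is pure bookkeeping.
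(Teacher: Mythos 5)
Your proof is correct and follows essentially the same route as the paper: anchor a point of $int(\Upsilon)$ with both coordinates above $(\hat{\lambda}_{1},\hat{\mu}_{1})$ (the paper invokes $(\Upsilon)_{3}$, you invoke $(\Upsilon)_{2}$), propagate it to the whole lower-left rectangle via Corollary \ref{corsub-sup}, and intersect with $L(t)$. Your explicit choice $\theta=\hat{\lambda}_{1}-\hat{\mu}_{1}$ and the $\delta$-ball argument placing the test points in $int(\Upsilon)$ (rather than merely in $\Upsilon$) make the passage to the definition of $\lambda(t)$ via $\overline{int(\Upsilon)}$ a bit more explicit than in the paper, but the substance is identical.
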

\begin{proof}
	By $(\Upsilon)_{3}$	we can find $(\lambda,\mu)\in \Upsilon$  with $\lambda>\hat{\lambda}_{1}$ and $\mu>\hat{\mu}_{1}$.  Let us set $\theta=\lambda-\mu$. Then, it is easy to see that
	\begin{align*} 
	L(t)\cap \partial \left((-\infty,\lambda]\times (-\infty,\mu]\right)=	\left\{ \begin{array}{c}
	(\lambda,\lambda-t)~ \mbox{if}~ t> \theta,\\
	(\mu+t,\mu)~\mbox{if}~t\leq\theta,
	\end{array}
	\right.
	\end{align*}
	and hence, by Corollary \ref{corsub-sup},
	$$\hat{\lambda}_{1}<\lambda\leq\lambda(t)~\mbox{if}~t> \theta$$
	and 
	$$\hat{\mu}_{1}<\mu\leq \lambda(t)-t=\mu(t)~\mbox{if}~t\leq \theta.$$
	
	This concludes the proof.
	
\end{proof}

Furthermore, we have the following lemma.
%With the same proof as in Lemma 3.7 of Alves-Alves-Santos we get the following.

\begin{lemma}\label{ll3}
	$\Gamma(t)\in \partial(int(\Upsilon))$ for every $t\in \mathbb{R}$.
\end{lemma}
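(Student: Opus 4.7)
The plan is to prove this directly from the definition of $\Gamma(t)$ as a supremum along the line $L(t)$. Recall $\partial(\mathrm{int}(\Upsilon)) = \overline{\mathrm{int}(\Upsilon)} \setminus \mathrm{int}(\Upsilon)$ since $\mathrm{int}(\Upsilon)$ is open, so I only need to verify two things: (i) $\Gamma(t) \in \overline{\mathrm{int}(\Upsilon)}$, and (ii) $\Gamma(t) \notin \mathrm{int}(\Upsilon)$.

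For (i), I first check that the set $S_t := \{\lambda : (\lambda, \lambda-t) \in \overline{\mathrm{int}(\Upsilon)}\}$ is nonempty and that the supremum is attained. Nonemptiness comes from $(\Upsilon)_1$: for any $\lambda < \min\{\hat{\lambda}_1, \hat{\mu}_1 + t\}$ the pair $(\lambda, \lambda-t)$ lies in $X_{\hat{\lambda}_1,\hat{\mu}_1} \subset \mathrm{int}(\Upsilon)$. Boundedness above follows from Lemma \ref{l0}: the inclusion $\mathrm{int}(\Upsilon) \subset (-\infty,\hat{\lambda}_1(\tilde{\Omega})] \times (-\infty,\hat{\mu}_1(\tilde{\Omega})]$ passes to the closure, giving $\lambda \leq \hat{\lambda}_1(\tilde{\Omega})$ for every $\lambda \in S_t$. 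The set $S_t$ is closed since it is the preimage of the closed set $\overline{\mathrm{int}(\Upsilon)}$ under the continuous map $\lambda \mapsto (\lambda, \lambda-t)$, so the supremum $\lambda(t)$ is attained and $\Gamma(t) \in \overline{\mathrm{int}(\Upsilon)}$.

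For (ii), I argue by contradiction. Suppose $\Gamma(t) \in \mathrm{int}(\Upsilon)$. Then there is $\delta > 0$ such that $B(\Gamma(t),\delta) \subset \mathrm{int}(\Upsilon)$. Choosing $\epsilon \in (0, \delta/\sqrt{2})$, the point $(\lambda(t)+\epsilon, \mu(t)+\epsilon) = (\lambda(t)+\epsilon, (\lambda(t)+\epsilon)-t)$ lies on $L(t)$ and in $B(\Gamma(t),\delta) \subset \mathrm{int}(\Upsilon) \subset \overline{\mathrm{int}(\Upsilon)}$. Hence $\lambda(t) + \epsilon \in S_t$, contradicting the maximality of $\lambda(t) = \sup S_t$. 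This contradiction forces $\Gamma(t) \notin \mathrm{int}(\Upsilon)$, and combining with (i) yields $\Gamma(t) \in \partial(\mathrm{int}(\Upsilon))$.

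I do not expect any real obstacle here: both parts are essentially topological bookkeeping once the supremum is known to be attained and finite. The only subtlety is checking nonemptiness/boundedness of $S_t$ so that $\lambda(t)$ is well defined, which is immediate from the already-established property $(\Upsilon)_1$ together with Lemma \ref{l0}.
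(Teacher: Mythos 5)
Your proof is correct and follows essentially the same route as the paper: show $\Gamma(t)\in\overline{int(\Upsilon)}$ by attainment of the supremum (the paper uses a sequence in $L(t)\cap\overline{int(\Upsilon)}$ converging to $\Gamma(t)$, which is the same closedness argument), then rule out $\Gamma(t)\in int(\Upsilon)$ by producing a point of $L(t)\cap\overline{int(\Upsilon)}$ with first coordinate larger than $\lambda(t)$. Your extra verification that $S_t$ is nonempty and bounded is a detail the paper handles separately (before the lemma, via $(\Upsilon)_1$ and Lemma \ref{l0}) when asserting that $\lambda(t)$ is well defined.
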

\begin{proof} For any  $t\in \mathbb{R}$ given, by the definition of $\lambda(t)$ there exists a sequence $\left\{(\lambda_{k},\mu_{k})\right\}\subset L(t)\cap \overline{int(\Upsilon)}$ which converge to $(\lambda(t),\mu)$, for some $\mu \in \mathbb{R}$. Now, by the definition of $L(t)$ and this convergence, we have  $\mu_{k}=\lambda_{k}-t$ and $\mu=\displaystyle \lim_{k\to \infty}\mu_{k}=\lambda(t)-t=\mu(t)$. Hence, $(\lambda(t),\mu(t))=(\lambda(t),\mu)\in  \overline{int(\Upsilon)}$, that is, $\Gamma(t)\in \overline{int(\Upsilon)}$. We claim that $\Gamma(t)\notin int(\Upsilon)$. Indeed, if $\Gamma(t)\in int(\Upsilon)$, then  there would be a $r>0$ such that $B_{r}(\Gamma(t))\subset int(\Upsilon)$. Since $B_{r}(\Gamma(t))\cap L(t)\neq \emptyset $ and $f(\lambda)=\lambda-t $ is an increasing function, there exists $\lambda>\lambda(t)$ such that $(\lambda,\lambda-t)\in B_{r}(\Gamma(t))$ and so $(\lambda,\lambda-t)\in  \overline{int(\Upsilon)}$, which is a contradiction with the definition of $\lambda(t)$. Therefore $\Gamma(t)\in \partial (int(\Upsilon))$. This concludes the proof of the lemma.
	
\end{proof}

Let us set
$$\lambda_{\ast}=\displaystyle\sup_{t\in \mathbb{R}}\lambda(t)~\mbox{and}~\mu_{\ast}=\displaystyle\sup_{t\in \mathbb{R}}\mu(t).$$
In the next lemmas, we prove the main properties of $\Gamma$ and $\Upsilon$.

\begin{lemma}\label{L8}
	The following conclusions hold true:
	\begin{itemize}
		\item[$a)$] $\lambda (t)$ is monotone nondecreasing and $\mu(t)$ is monotone nonincreasing,
		\item[$b)$]  
		$\Gamma:\mathbb{R}\longrightarrow\mathbb{R}^{2}$ is a continuous function,
		%	\item[$b)$] $\lambda (t)$ is nonincreasing and $\mu(t)$ is nondecreasing,
		\item[$c)$] $\displaystyle \lim_{t\to +\infty}\Gamma(t)=(\lambda_{\ast},-\infty)$ and $\displaystyle \lim_{t\to -\infty}\Gamma(t)=(-\infty,\mu_{\ast})$,
		\item[$d)$] $\Gamma (t)$ is injective,
		\item[$e)$] 
		\begin{equation}\label{119}
		\partial (int(\Upsilon))\setminus \left(\Lambda_{1}\cup \Lambda_{2}\right)=\left\{\Gamma (t): t\in \mathbb{R}\right\},
		\end{equation}
		
		\item[$f)$] the
		\begin{align}\label{1191}
		\overline{int(\Upsilon)}=\displaystyle \bigcup_{t\in \mathbb{R}}\left\{(\lambda,\mu)\in L(t): (\lambda,\mu) \leq \Gamma(t) \right\}.
		\end{align}
		
		%\item[$g)$] $\displaystyle \lim_{t\to 0}\Gamma(t)=(\infty,0)$ and $\displaystyle \lim_{t\to \infty}\Gamma(t)=(0,\infty)$ if in addition we assume $(H)_{6}-(H)_{8}$,
		
	\end{itemize}
	
\end{lemma}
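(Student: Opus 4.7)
The plan is to prove (a)--(f) in sequence by repeatedly invoking a single downward-propagation property of $\Upsilon$ extracted from Corollary \ref{corsub-sup}: if $(\bar\lambda,\bar\mu)\in\overline{int(\Upsilon)}$ and $(\lambda,\mu)\le(\bar\lambda,\bar\mu)$ with $(\lambda,\mu)\notin\Lambda_{1}\cup\Lambda_{2}$, then $(\lambda,\mu)\in\overline{int(\Upsilon)}$; if moreover $(\lambda,\mu)<(\bar\lambda,\bar\mu)$ componentwise strictly, then $(\lambda,\mu)\in int(\Upsilon)$. I would verify this by approximating $(\bar\lambda,\bar\mu)$ from $int(\Upsilon)$ by a nearby point that lies outside $\Lambda_{1}\cup\Lambda_{2}\cup X_{\hat{\lambda}_{1},\hat{\mu}_{1}}$ but still dominates $(\lambda,\mu)$, which is possible thanks to $(\Upsilon)_{1}$--$(\Upsilon)_{3}$ together with the trivial downward-closedness of $X_{\hat{\lambda}_{1},\hat{\mu}_{1}}$, and then applying the corollary on an open box around $(\lambda,\mu)$.

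For (a), fix $t_{1}<t_{2}$. The point $(\mu(t_{2})+t_{1},\mu(t_{2}))$ lies on $L(t_{1})$ strictly below $\Gamma(t_{2})\in\overline{int(\Upsilon)}$ (since $\mu(t_{2})+t_{1}<\lambda(t_{2})$), so the propagation gives $\lambda(t_{1})\ge\mu(t_{2})+t_{1}$, i.e.\ $\mu(t_{1})\ge\mu(t_{2})$; symmetrically the point $(\lambda(t_{1}),\lambda(t_{1})-t_{2})$ on $L(t_{2})$ yields $\lambda(t_{2})\ge\lambda(t_{1})$. For (c), monotonicity together with Lemma \ref{l0} gives $\lambda(t)\nearrow\lambda_{\ast}$ as $t\to+\infty$, whence $\mu(t)=\lambda(t)-t\to-\infty$; the other limit is symmetric. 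Injectivity (d) is immediate: $\lambda(t_{1})=\lambda(t_{2})$ combined with $\mu(t_{i})=\lambda(t_{i})-t_{i}$ forces $t_{1}=t_{2}$.

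For continuity (b): when $t_{n}\searrow t_{0}$, the monotone sequence $\Gamma(t_{n})$ converges to some $(L,L-t_{0})$ that lies in $\overline{int(\Upsilon)}$ by closedness, so the sup definition yields $\lambda(t_{0})\ge L$; combined with $L\ge\lambda(t_{0})$ from monotonicity, $\lambda$ is right-continuous at $t_{0}$. When $t\nearrow t_{0}$, the inequality $\lambda(t)\ge\lambda(t_{0})-(t_{0}-t)$, which is just part (a) applied with $t_{1}=t$, $t_{2}=t_{0}$, gives $\liminf_{t\to t_{0}^{-}}\lambda(t)\ge\lambda(t_{0})$, while monotonicity supplies the reverse bound. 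Continuity of $\mu(t)=\lambda(t)-t$ and of $\Gamma$ follow.

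For (e): the inclusion $\supseteq$ combines Lemma \ref{ll3} with Lemma \ref{ll1}, since the latter rules out $\Gamma(t)\in\Lambda_{1}\cup\Lambda_{2}$ via $\lambda(t)>\hat{\lambda}_{1}$ for $t>\theta$ and $\mu(t)>\hat{\mu}_{1}$ for $t\le\theta$. For $\subseteq$, given $(\lambda_{0},\mu_{0})\in\partial(int(\Upsilon))\setminus(\Lambda_{1}\cup\Lambda_{2})$, set $t_{0}=\lambda_{0}-\mu_{0}$, so $(\lambda_{0},\mu_{0})\in L(t_{0})\cap\overline{int(\Upsilon)}$ and therefore $\lambda_{0}\le\lambda(t_{0})$; a strict inequality would place $(\lambda_{0},\mu_{0})$ componentwise strictly below $\Gamma(t_{0})$, hence in $int(\Upsilon)$ by propagation, contradicting the boundary hypothesis. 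Finally, (f) is immediate: the inclusion $\subseteq$ follows by taking $t=\lambda-\mu$ and applying the sup definition, while $\supseteq$ is the propagation claim on each $L(t)$. The main obstacle throughout is the careful execution of the propagation claim near the excluded sets $\Lambda_{1}\cup\Lambda_{2}\cup X_{\hat{\lambda}_{1},\hat{\mu}_{1}}$, where Corollary \ref{corsub-sup} does not apply directly and one must perturb the dominating point into the admissible region while preserving the required inequality.
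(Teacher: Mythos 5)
Your proposal is correct and follows essentially the same route as the paper: the downward-propagation property you isolate from Corollary \ref{corsub-sup} (together with $(\Upsilon)_{1}$ and Lemmas \ref{l0}, \ref{ll1}, \ref{ll3}) is exactly the engine of the paper's proof, and your treatment of $(c)$--$(f)$ matches it step for step. The only cosmetic differences are that the paper proves $(a)$ by contradiction rather than directly, and proves $(b)$ by packaging the two monotonicity inequalities into the single Lipschitz bound $\vert\Gamma(s)-\Gamma(t)\vert\leq\vert t-s\vert$ instead of your one-sided-limit argument.
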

\begin{proof} Firstly let us prove $a)$. Let us suppose by contradiction that $\lambda(t)$ is not monotone nondecreasing. Then there exist $t,s\in \mathbb{R}$, with $t<s$ and $\lambda(t)>\lambda(s)$. Thus, $\lambda(t)>\lambda(s)$ and $\mu(t)>\mu(s)$. Let $\lambda$ such that $\lambda(s)<\lambda<\lambda(t)$. Since
	$$\mu(s)=\lambda(s)-s<\lambda-s<\lambda-t<\lambda(t)-t=\mu(t),$$
	it follows from Lemma \ref{ll1}, Theorem \ref{sub-sup} and definition of $\Gamma(t)$ that  system $(P_{\lambda,\lambda-t})$ has a positive solution $(u,v)$, which is a supersolution of $(P_{\lambda,\lambda-s})$. So, Lemma \ref{ll1} and Theorem \ref{sub-sup}  imply that system $(P_{\lambda,\lambda-s})$ admits a solution $(\tilde{u},\tilde{v})$, which lead us to conclude that   $\lambda(s)<\lambda\leqslant \lambda(s)$, but this is a contradiction. Similarly, we deduce the monotonicity of $\mu(t)$.

	Now, let us to prove $b)$. Let $s,t\in \mathbb{R}$. Without loss of generality, we can suppose that $s<t$. Therefore, combining the definition of $\Gamma (t)$ with the monotone nondecrease of $\lambda(t)$ and the monotone nonincrease of $\mu(t)$, one has
	$$\vert \Gamma(s)-\Gamma(t)\vert \leq \vert \lambda(s)-\lambda(t)\vert+\vert \mu(s)-\mu(t)\vert=-\lambda(s)+\lambda(t)+\lambda(s)-s-\lambda(t)+t,$$
	namely,
	\begin{equation}\label{lip}
	\vert \Gamma(s)-\Gamma(t)\vert \leq \vert t-s\vert, \forall s,t \in \mathbb{R}.
	\end{equation}
	
	This means that $\Gamma$ is continuous. 	
	
	Let us prove the first statement of item $c)$. From $\lambda(t)\leq \hat{\lambda}_{1}(\tilde{\Omega})$ for every $t\in \mathbb{R}$ we obtain $\mu(t)=\lambda(t)-t\leq \hat{\lambda}_{1}(\tilde{\Omega})-t$ for every $t\in \mathbb{R}$, and passing to the limit as $t\to +\infty$, we have $\displaystyle \lim_{t\to +\infty}\mu(t)=-\infty$. This, item $a)$ and definition of $\lambda_{\ast}$ imply that 
	$\displaystyle \lim_{t\to +\infty}\Gamma(t)=(\lambda_{\ast},-\infty)$. 
	
	To prove the second statement, we note that $\mu(t)\leq \hat{\mu}_{1}(\tilde{\Omega})
	$ for every $t\in \mathbb{R}$ implies $\lambda(t)\leq \hat{\mu}_{1}(\tilde{\Omega})+t$ for every $t\in \mathbb{R}$, and passing to the limit as $t\to -\infty$, we have $\displaystyle \lim_{t\to -\infty}\lambda(t)=-\infty$. This, item $a)$ and definition of $\mu_{\ast}$ imply that 
	$\displaystyle \lim_{t\to -\infty}\Gamma(t)=(-\infty,\mu_{\ast})$. This completes the proof of the item $c)$.
	
	Now, let us prove $d)$. If $\Gamma(t)=\Gamma(s)$, then $\lambda(t)=\lambda(s)$ and $\lambda(t)-t=\lambda(s)-s$  that implies $t=s$. Therefore, $\Gamma$ is injective and this completes the proof of $d)$.
	
	Proof of $e)$. It follows from Lemmas \ref{ll3} and \ref{ll1}  that
	$$\left\{\Gamma (t): t\in \mathbb{R}\right\}\subset \partial (int(\Upsilon))\setminus \left(\Lambda_{1}\cup \Lambda_{2}\right)$$
	and so, to complete the proof, it suffices  to show
	$$\partial (int(\Upsilon))\setminus \left(\Lambda_{1}\cup \Lambda_{2}\right) \subset \left\{\Gamma (t): t\in \mathbb{R}\right\}.$$
	
	To do this, by letting
	\begin{equation}\label{E14}
	(a,b)\in \partial (int(\Upsilon))\setminus \left(\Lambda_{1}\cup \Lambda_{2}\right),
	\end{equation}
	we have that
	$$
	(a,b)\in L(t_{0})
	$$
	for $t_{0}=a-b$, whence together with  (\ref{E14}), we obtain  $(a,b)\in L(t_{0})\cap \overline{int(\Upsilon)}$. Moreover, by definition of $\lambda(t_{0})$, we have that $a\leq \lambda(t_{0})$. Therefore,
	$$\left\{(a,b),(\lambda (t_{0}),\mu (t_{0}))\right\}\subset L(t_{0})\ \text{and}\ a\leq \lambda (t_{0}).$$
	
	We are going to proof that $a=\lambda(t_{0})$. If $a< \lambda (t_{0})$, then $b< \mu (t_{0})$. By definition of $\Gamma (t_{0})$, there exists $\left\{(\lambda_{k},\mu_{k})\right\}\subset int(\Upsilon)$ such that $\lambda_{k}\rightarrow \lambda(t_{0})$ and $\mu_{k}\rightarrow \mu(t_{0})$ with $k\rightarrow +\infty$. Hence, there exists $k_{0}\in \mathbb{N}$ such that
	$$a< \lambda_{k_{0}}<\lambda(t_{0})\ \text{and}\ b< \mu_{k_{0}}<\mu(t_{0}),$$
	which implies, together with  Lemma \ref{ll1} and Theorem \ref{sub-sup}, that
	$$(a,b)\in(-\infty,\lambda_{k_{0}}[\times (-\infty,\mu_{k_{0}}[\setminus (-\infty,\hat{\lambda}_{1}]\times (-\infty,\hat{\mu}_{1}]\subset int(\Upsilon),$$
	that is, $(a,b)\in int(\Upsilon)$, but this is a contradiction with (\ref{E14}). So
	$$(a,b)=(\lambda (t_{0}),\mu (t_{0}))\in \left\{\Gamma (t): t\in \mathbb{R}\right\}$$
	that shows \eqref{119}. This finishes the proof of $e)$.
	
	%Finally, we show that $\partial (int(\Upsilon))$ is a simple closed curve. It is clear that
	%$$\left\{(\lambda ,0): \lambda \in [0,\lambda_{\ast}]\right\}\cup \left\{(0,\mu ): \mu \in [0,\mu_{\ast}]\right\}$$
	%is a continuous simple arc. In addition, by items $c)$ and $d)$ of Lemma \ref{L8}, we have that 
	%$$\left\{\Gamma (t): t\in (0,\infty)\right\}~\mbox {and}~ \left\{(\lambda ,0): \lambda \in [0,\lambda_{\ast}]\right\}\cup \left\{(0,\mu ): \mu \in [0,\mu_{\ast}]\right\}$$
	%has just their end points $\left\{(\lambda_{\ast},0),(0,\mu_{\ast})\right\}$ in common. So, this information, together with the fact that $\Gamma (t) $ is a simple arc, imply by \eqref{119}  that $\partial (int(\Upsilon))$ is a simple closed curve.
	
	Proof of $f)$. By definition, for any 
	$$(a,b)\in \displaystyle \bigcup_{t\in \mathbb{R}}\left\{(\lambda,\mu)\in L(t): (\lambda,\mu) \leq \Gamma(t)\right\}$$
	given, there exists a $t\in \mathbb{R}$ such that
	\begin{equation}\label{120}
	(a,b)\in L(t),~a \leq \lambda(t)~\mbox{and}~ b\leq\mu(t).
	\end{equation}
	
	In view of Lemma \ref{ll3}, $(\lambda(t),\mu(t))\in L(t)\cap \partial(int(\Upsilon))$. Let  $(\lambda,\mu)<(\lambda(t),\mu(t))$. So, by Lemma \ref{ll1} there exists $(\kappa,\xi)\in int(\Upsilon)$ such that $(\lambda,\mu)<(\kappa,\xi)$ and either $\hat{\lambda}_{1}<\kappa$ or $\hat{\mu}_{1}<\xi$, which implies by Theorem \ref{sub-sup}  that $(\lambda,\mu)\in \left(int(\Upsilon)\cup \Lambda_{1}\cup \Lambda_{2}\right)\subset \overline{int(\Upsilon)}$. Therefore $(-\infty,\lambda(t)]\times (-\infty,\mu(t)]\subset \overline{int(\Upsilon)}$ and by \eqref{120} we have $(a,b)\in \overline{int(\Upsilon)}$. This means that
	\begin{equation}\label{144}
	\bigcup_{t\in \mathbb{R}}\left\{(\lambda,\mu)\in L(t): (\lambda,\mu) \leq \Gamma(t)\right\}\subset \overline{int(\Upsilon)}.
	\end{equation}

	To end the proof, we claim that			
	\begin{align}\label{148}
	\overline{int(\Upsilon)}\subset \bigcup_{t\in \mathbb{R}}\left\{(\lambda,\mu)\in L(t):(\lambda,\mu) \leq \Gamma(t)\right\}.
	\end{align}
	
	Indeed, for any $(a,b)\in \overline{int(\Upsilon)}$, we obtain   $(a,b)=(a,a-(a-b))\in L(t)$, where $t=a-b$ and so $(a,b)\in L(t)\cap \overline{int(\Upsilon)}$. By the definitions of $\lambda(t)$ and $\mu(t)$, we have $a\leq \lambda(t)$ and $b\leq \mu(t)$. Hence, $(a,b)\in L(t), a\leq \lambda (t)$ and $b \leq \mu(t)$, that is,
	$$
	(a,b)\in \bigcup_{t\in \mathbb{R}}\left\{(\lambda,\mu)\in L(t): (\lambda,\mu) \leq \Gamma(t)\right\}.
	$$
	Thus, the claim follows. Combining  \eqref{144} and \eqref{148} we get \eqref{1191}. The proof of lemma is now complete.
	
\end{proof}

\begin{lemma}\label{l9} The following properties hold true:
	\begin{itemize}
		\item[$a)$]$\partial(\Upsilon)=\partial(int(\Upsilon))$. 
		\item[$b)$] $\overline{\Upsilon}=\overline{int(\Upsilon)}$.
	\end{itemize}
\end{lemma}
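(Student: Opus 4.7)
My plan is to prove (b) first and obtain (a) as an immediate corollary. Using the general identity $\partial A = \overline{A}\setminus \mathrm{int}(A)$ together with $\mathrm{int}(\mathrm{int}(\Upsilon)) = \mathrm{int}(\Upsilon)$, the equality $\overline{\Upsilon} = \overline{\mathrm{int}(\Upsilon)}$ from (b) gives
\[
\partial(\Upsilon)=\overline{\Upsilon}\setminus \mathrm{int}(\Upsilon)=\overline{\mathrm{int}(\Upsilon)}\setminus \mathrm{int}(\Upsilon)=\partial(\mathrm{int}(\Upsilon)),
\]
which is (a). So the entire content of the lemma reduces to (b), and in fact to the nontrivial inclusion $\Upsilon \subset \overline{\mathrm{int}(\Upsilon)}$, since $\overline{\mathrm{int}(\Upsilon)}\subset \overline{\Upsilon}$ is immediate from $\mathrm{int}(\Upsilon)\subset \Upsilon$.

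To verify $\Upsilon \subset \overline{\mathrm{int}(\Upsilon)}$, I will take an arbitrary $(\lambda,\mu)\in\Upsilon$ and split according to the position of $(\lambda,\mu)$ relative to the ``singular set'' $\Lambda_1\cup\Lambda_2\cup X_{\hat{\lambda}_1,\hat{\mu}_1}$ that already appears in Theorem \ref{sub-sup} and Corollary \ref{corsub-sup}. In \textbf{Case A}, when $(\lambda,\mu)\in X_{\hat{\lambda}_1,\hat{\mu}_1}$, property $(\Upsilon)_1$ places the point directly inside $\mathrm{int}(\Upsilon)$. In \textbf{Case B}, when $(\lambda,\mu)\in \Lambda_1\cup\Lambda_2$, the coordinates satisfy $\lambda\leq \hat{\lambda}_1$ and $\mu\leq \hat{\mu}_1$, so the diagonal sequence $(\lambda-1/n,\mu-1/n)$ eventually lies in $X_{\hat{\lambda}_1,\hat{\mu}_1}\subset \mathrm{int}(\Upsilon)$ and converges to $(\lambda,\mu)$. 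In \textbf{Case C}, when $(\lambda,\mu)\notin \Lambda_1\cup\Lambda_2\cup X_{\hat{\lambda}_1,\hat{\mu}_1}$, I will again use the approximants $(\lambda_n,\mu_n)=(\lambda-1/n,\mu-1/n)$ but now show that each of them belongs to $\mathrm{int}(\Upsilon)$ for $n$ large. Since $\Lambda_1$ and $\Lambda_2$ are closed lines missing $(\lambda,\mu)$, I can pick $\varepsilon_n\in(0,1/n)$ such that $B_{\varepsilon_n}(\lambda_n,\mu_n)$ is disjoint from $\Lambda_1\cup\Lambda_2$ and contained in $(-\infty,\lambda)\times(-\infty,\mu)$. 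For any $\sigma=(s,t)\in B_{\varepsilon_n}(\lambda_n,\mu_n)$, either $\sigma\in X_{\hat{\lambda}_1,\hat{\mu}_1}\subset \mathrm{int}(\Upsilon)\subset \Upsilon$ by $(\Upsilon)_1$, or $\sigma$ avoids the whole singular set while still satisfying $\sigma\leq (\lambda,\mu)$, so Corollary \ref{corsub-sup} applied with $(\overline{\lambda},\overline{\mu})=(\lambda,\mu)$ gives $\sigma\in \Upsilon$. Hence the whole ball is in $\Upsilon$ and $(\lambda_n,\mu_n)\in \mathrm{int}(\Upsilon)$, so $(\lambda,\mu)\in\overline{\mathrm{int}(\Upsilon)}$.

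I expect Case C to be the main obstacle. The delicate point is that Corollary \ref{corsub-sup} cannot be used at points of $\Lambda_1\cup\Lambda_2$ (its conclusion there would be spoiled by the spurious semitrivial solutions $(\epsilon\phi_1,0)$ and $(0,\epsilon\psi_1)$ and by the fact that such points are not a priori in $\Upsilon$), so I have to actively steer the approximating ball away from those two lines; and simultaneously I must accept that the same ball may dip into $X_{\hat{\lambda}_1,\hat{\mu}_1}$, where Corollary \ref{corsub-sup} also does not apply but is no longer needed thanks to $(\Upsilon)_1$. Once these two cleaning steps are organised — shrinking $\varepsilon_n$ to avoid $\Lambda_1\cup\Lambda_2$, and covering the $X$-portion of the ball via $(\Upsilon)_1$ — the rest of the argument is a routine case check and a passage to the limit.
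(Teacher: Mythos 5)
Your proof is correct and rests on exactly the same two ingredients as the paper's argument --- Corollary \ref{corsub-sup} together with $(\Upsilon)_{1}$, used to show that every point of $\Upsilon$ is a limit of points of $int(\Upsilon)$ --- so it is essentially the same approach; the only difference is presentational, in that the paper proves $a)$ directly from the definition of the boundary and remarks that $b)$ follows the same way, whereas you prove $b)$ first and deduce $a)$ formally. Your Case C simply spells out the details that the paper compresses into the single assertion that $(\kappa,\xi)\in \Upsilon$ implies $(\kappa,\xi)\in \overline{int(\Upsilon)}$ ``by Corollary \ref{corsub-sup} and $(\Upsilon)_{1}$''.
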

\begin{proof}
	$a)$	It is well
	known that $\partial(int(\Upsilon))\subset \partial(\Upsilon)$. Then it is sufficient to show that
	$\partial(\Upsilon)\subset \partial(int(\Upsilon))$. Indeed, let $(\lambda,\mu)\in \partial (\Upsilon)$ be
	arbitrary and $r>0$. If $(\kappa,\xi)\in \Upsilon \cap B_{r}((\lambda,\mu))$, then by Corollary \ref{corsub-sup} and $(\Upsilon)_{1}$ one has $(\kappa,\xi)\in \overline{int(\Upsilon)}$ and as a consequence, because $B_{r}((\lambda,\mu))$ is an open set, there exists $(\tilde{\kappa},\tilde{\xi})\in int(\Upsilon)\cap B_{r}((\lambda,\mu))$. Thus, $int(\Upsilon)\cap B_{r}((\lambda,\mu))\neq \emptyset$. On the other hand, since $\Upsilon^{c}\subset \left(int(\Upsilon)\right)^{c}$ and $B_{r}(\lambda,\mu)\cap \Upsilon^{c}\neq \emptyset$ we get $B_{r}(\lambda,\mu)\cap \left(int(\Upsilon)\right)^{c}\neq \emptyset$. Because $r>0$ was arbitrary, we conclude that $(\lambda,\mu)\in \partial(int(\Upsilon)) $ and $\partial(\Upsilon)\subset \partial(int(\Upsilon))$. Therefore $\partial(\Upsilon)=\partial(int(\Upsilon))$. \newline
	$b)$ Obviously $\overline{int(\Upsilon)}\subset \overline{\Upsilon}$. Moreover, we  proceed as in
	the proof of $a)$ to deduce that $int(\Upsilon)\cap B_{r}((\lambda,\mu))\neq \emptyset$ for every $(\lambda,\mu)\in \overline{\Upsilon}$ and $r>0$, and as a result we have  $\overline{\Upsilon}\subset \overline{int(\Upsilon)}$. Therefore $ \overline{\Upsilon}= \overline{int(\Upsilon)}$. The proof of lemma is now complete.
	
\end{proof}

We are now ready to prove Theorem \ref{T1}.

\textbf{\it Proof of Theorem \ref{T1}.} Let us define 
$$\Theta_{1}=int(\Upsilon)~\mbox{and}~\Theta_{2}=\mathbb{R}^{2}\backslash \overline{\Upsilon}.$$

By definition, system \eqref{pq} has  at least one positive solution for every $(\lambda,\mu)\in \Theta_{1}$, and in particular $ \Theta_{1}\backslash \left(\Lambda_{1}\cup \Lambda_{2}\right)$ shares the same property. On the other hand, since $\Upsilon\subset \overline{\Upsilon}$, we infer that system \eqref{pq} has no positive solution for every $(\lambda,\mu)\in \Theta_{2}$. Moreover,  from Lemma \ref{L8} $e)$ one has $\partial \Theta_{1}\backslash \left(\Lambda_{1}\cup \Lambda_{2}\right)=\Gamma$. This concludes the proof of the theorem.

%\begin{align*}
%\overline{\Upsilon}=\overline{int(\Upsilon)}&=\partial\left(int(\Upsilon)\right)\cup int(\Upsilon) \\
%&=\left(\partial\left(int(\Upsilon)\right)\backslash \left(\hat{\lambda}_{1}\cup \Lambda_{2}\right)\right)\cup \left(\hat{\lambda}_{1}\cup \Lambda_{2}\right) \cup int(\Upsilon)\\
%& =\left\{\Gamma (t): t\in \mathbb{R}\right\}\cup \left(\hat{\lambda}_{1}\cup \Lambda_{2}\right) \cup \Theta_{1},
%\end{align*}

\begin{remark}
	We do not know whether $\Lambda_{1}\cup \Lambda_{2}\subset \Upsilon$. If this is true,  from Lemma \ref{L8}-$e)$ and Lemma \ref{l9} $a)-b)$  one has $\partial(\Upsilon)=\partial(int(\Upsilon))=\Gamma$. In Theorem \ref{T2} we will show that if $p=q=2$, then Problem $(P_{\hat{\lambda}_{1},\hat{\lambda}_{1}})$ admits at least one positive solution, and hence $(\Lambda_{1}\cup \Lambda_{2})\cap \Upsilon\neq \emptyset$. %We conjecture that there exists  at least one positive solution for $(\lambda,\mu)\in \Lambda_{1}\cup \Lambda_{2}$. However, we are not able to prove it. If this is true, note that from Lemma \ref{L8}-$e)$ and Lemma \ref{l9} $a)-b)$  one has $\partial(\Upsilon)=\partial(int(\Upsilon))=\Gamma$.
\end{remark}
\begin{remark}
	Also, we do not know whether $\Gamma\subset \Upsilon$. If this is true, note that from  Lemma \ref{l9} $a)-b)$  one has $\Upsilon=\overline{int(\Upsilon)}$. In particular, $\Upsilon$ is a closed set.
\end{remark}

\section{Proof of Theorem \ref{T2}}

In this section we will prove Theorem \ref{T2}. Let us assume $p=q=2$, $2<\alpha+\beta<2^{\ast}$ and $f\in C(\overline{\Omega})$. Consider the problem
\begin{equation*}\label{p}\tag{$P_{\lambda}$}
\left\{
\begin{aligned}
-\Delta u = \lambda u+f(x) \vert u\vert^{\alpha+\beta-2}u~in ~ \Omega ,\\
u>0 ~\mbox{in} ~\Omega\\
u=0~\mbox{on}~\Omega, \nonumber
\end{aligned}
\right.
\end{equation*}

Let $(f)_{1}-(f)_{2}$ hold. From Alama-Tarantello \cite{AL} there exists $\tau>\hat{\lambda}_{1}$ such that:
\begin{itemize}
	\item[$a)$] for every $\lambda\in (\hat{\lambda}_{1},\tau)$, \eqref{p} admits at least two positive solutions.
	\item[$b)$] For $\lambda=\hat{\lambda}_{1}$ and $\lambda=\tau$, problem \eqref{p} admits at least one positive solution.
	\item[$c)$] For $\lambda>\tau$  problem \eqref{p} does not admit any positive solutions.
\end{itemize}
By using this result we have.

\textbf{\it Proof of Theorem \ref{T2} }
$a)$ Let $u_{\lambda}$ and $v_{\lambda}$  be two distinct positive solutions of \eqref{p}. It is easy to see that $(u_{\lambda},u_{\lambda})$ is a positive solution of the problem
\begin{equation*}\label{q}\tag{$Q_{\lambda}$}
\left\{
\begin{aligned}
-\Delta u_{\lambda} = \lambda  u_{\lambda}+ f(x) \vert u_{\lambda}\vert^{\alpha-2}\vert u_{\lambda}\vert ^{\beta}u_{\lambda}~in ~ \Omega ,\\
-\Delta u_{\lambda} =  \lambda u_{\lambda}+f(x)\vert u_{\lambda}\vert ^{\alpha}\vert u_{\lambda}\vert^{\beta-2}u_{\lambda}~in~ \Omega ,\\
u_{\lambda}>0 ~\mbox{in} ~\Omega\\
u_{\lambda}=0~\mbox{on}~\Omega, \nonumber
\end{aligned}
\right.
\end{equation*}

Multiplying the first equation of \eqref{q} by $t$ and the second equation by $s$, where $t,s>0$, we get (in the weak sense)
\begin{equation*}
\left\{
\begin{aligned}
-\Delta (tu_{\lambda}) = \lambda  (tu_{\lambda})+ t^{2-\alpha}s^{-\beta}f(x) \vert tu_{\lambda}\vert^{\alpha-2}\vert su_{\lambda}\vert ^{\beta}tu_{\lambda}~in ~ \Omega ,\\
-\Delta (su_{\lambda}) =  \lambda (su_{\lambda})+t^{-\alpha}s^{2-\beta}f(x)\vert tu_{\lambda}\vert ^{\alpha}\vert su_{\lambda}\vert^{\beta-2}su_{\lambda}~in~ \Omega ,\\
tu_{\lambda},su_{\lambda}>0 ~\mbox{in} ~\Omega,\\
tu_{\lambda}=su_{\lambda}=0~\mbox{on}~\Omega. \nonumber
\end{aligned}
\right.
\end{equation*}	

Choosing $t,s>0$ such that
$$t^{2-\alpha}s^{-\beta}=\alpha~\mbox{and}~t^{-\alpha}s^{2-\beta}=\beta$$
we find
$$t=\left(\alpha\right)^{\frac{\beta-2}{4d}}\left(\beta\right)^{\frac{-\beta}{4d}},~~~~~s=\left(\alpha\right)^{\frac{-\alpha}{4d}}\left(\beta\right)^{\frac{\alpha-2}{4d}},$$
where $d=\frac{\alpha}{2}+\frac{\beta}{2}-1\neq 0$, by the assumption. Hence $(\tilde{u}_{\lambda},\tilde{u}_{\lambda})=(tu_{\lambda},su_{\lambda})$ is a positive solution of $(P_{\lambda,\lambda})$. Similarly, we can prove that $(\tilde{v}_{\lambda},\tilde{v}_{\lambda})=(tv_{\lambda},sv_{\lambda})$ is a positive solution of $(P_{\lambda,\lambda})$ with $(\tilde{v}_{\lambda},\tilde{v}_{\lambda})\neq (\tilde{u}_{\lambda},\tilde{u}_{\lambda})$.  This completes the proof of  $a)$.

Arguing as in the proof of $a)$ we can prove $b)$. This finishes the proof of Theorem \ref{T2}.

{\it\small Ricardo Lima Alves}\\{\it\small  Departamento de Matem\'atica}\\ {\it\small Universidade de Bras\'ilia }\\
{\it\small 70910-900 Bras\'ilia}\\{\it\small DF - Brasil}\\{\it\small e-mail: ricardoalveslima8@gmail.com}\vspace{1mm}


\begin{thebibliography}{99}
	%\bibitem{ABC} A. Ambrosetti, H. Brezis, G. Cerami, {\it Combined effects of concave and convex nonlinearities in some elliptic problems}, J. Funct. Anal. 122 (1994) 519-543.
	%\bibitem{ACG} C. O. Alves, F. J. S. A. Corrêa, J. V. A. Gonçalves, {\it Existence of solutions for some classes of singular Hamiltonian systems}, Advanced Nonlinear Studies 5, (2005), 265-278.
	%	\bibitem{ADS} C.O. Alves, D.C. de Morais Filho, M.A.S. Souto, {\it On systems of elliptic equations involving subcritical or critical Sobolev exponents}, Nonlinear Anal. 42(2000) 771-787.
	
	%\bibitem{AF} C.O. Alves, D.G. de Figueiredo, {\it Nonvariational elliptic systems}, Discrete Contin. Dyn. Syst. 8 289-302 (2002).
	%\bibitem{CRA}  C.O. Alves, L.A. Romildo, B.N. Al\^annio, {\it Bifurcation properties for a class of fractional Laplacian equations in $\mathbb{R}^{N}$}, Mathematische Nachrichaten. 2018; 1-20.
	\bibitem{AL} S. Alama, G. Tarantello, {\it On semilinear elliptic equations with indefintite nonlinearities}, Calc. Var. 1 (1993) 439-475.
	\bibitem{L} An L\^e, {\it Eigenvalue problems for the $p$-Laplacian} Nonlinear Anal., \textbf{64} (2006), 1057-1099.
	\bibitem{AAS} R.L. Alves, C.O. Alves, C.A. Santos, {\it Extremal curves for existence of positive solutions for  multi-parameter elliptic systems in $\Omega$ }.  Milan J. Math. \textbf{88}, 1-33 (2020). https://doi.org/10.1007/s00032-019-00305-3.
	\bibitem{AH} W. Allegretto, Y. Huang, {\it A Picone's Identity for the $p$-Laplacian and applications.}
	Nonlinear Anal. 32(7), 819-830 (1998).
	
	%\bibitem{Bau} M. Benrhouma, {\it  Existence and uniqueness of solutions for a singular semilinear elliptic system}, Nonlinear Anal.107, 134-146 (2014).
	\bibitem{BI1} V. Bobkov, Y. Il'yasov, {\it Asymptotic behaviour of branches for ground states of elliptic systems}, Electron. J. Differential Equations (2013), No. 212, 21.  
	\bibitem{BI2} V. Bobkov, Y. Il'yasov, {\it Maximal existence domains of positive solutions for two-parametric systems of elliptic equations}, Complex Var. Elliptic Equ. 61 (2016), no. 5, 587-607.  
	\bibitem{BM} Y. Bozhkov, E. Mitidieri, {\it Existence of multiple solutions for quasilinear systems via fibering method}, J. Differential Equations 190 (2003) 239-267.
	
	%\bibitem{RCC} R.L. Alves, C.O. Alves, C.A. Santos, {\it  Extremal Curves for Existence of Positive Solutions for Multi-parameter Elliptic Systems in $\mathbb{R}^{N}$}. Milan J. Math. \textbf{88}, 1-33 (2020). https://doi.org/10.1007/s00032-019-00305-3. 
	\bibitem{CZ} X. Cheng, Z. Zhang, {\it Positive solutions for a class of multi-parameter elliptic
		systems.} Nonlinear Anal. Real World Appl. 14, 1551-1562 (2013).
	\bibitem{MP} S.A. Marano,  N.S.  Papageorgiou, {\it Positive  solutions  to  a  Dirichlet  problem  with $p$-Laplacian  and concave-convex nonlinearity depending on a parameter}.Commun.  Pure Appl.  Anal. 2013, 12, 815-829.
	%\bibitem{BN} H. Brezis, L. Nirenberg, {\it $H^{1}$ versus $C^{1}$ local minimizers}, C.R. Acad. Sci. Paris 317 (1993) 465-472.
	%\bibitem{CKM} S. Carl, V.K. Le, D. Motreanu, {\it Nonsmooth Variational Problems and their Inequalities}. Comparison Principles and Applications. Springer, New York (2007).
	%\bibitem{CP} S. Carl, K. Perera, {\it Generalized solutions of singular $p$-Laplacian problems in
	%	$\mathbb{R}^{N}$}, Nonlinear Studies, Vol. 18, No. 1, (2011) 113-124.
	%\bibitem{CESG} M. L. M. Carvalho, D. Da Silva, Edcarlos, C. A. Santos, C. Goulart, {\it Ground and bound state solutions for quasilinear elliptic systems including singular nonlinearities and indefinite potentials}, arXiv:1811.07360.
	%\bibitem{CFM} P. Cl\'ement, D.G. de Figueiredo, E. Mitidieri, {\it Positive solutions of semilinear elliptic systems}, Comm. Partial Differential Equations 17 923-940 (1992).
	%\bibitem{CR} F. C\^irstea, V. R\v{a}dulescu, {\it Existence and uniqueness of positive solutions to a semilinear elliptic problem in $\mathbb{R}^{N}$}, J. Math. Anal. Appl. 229 (1999) 417-425.
	%\bibitem{D} D.G. Costa, {\it On a class of elliptic systems in $\mathbb{R}^{N}$}, Electron. J.
	%	Differential Equations 7 (1994) 1-14.
	%	\bibitem{CRT} M.G.  Crandall,  P.H.  Rabinowitz,  L.  Tatar,  {\it On a Dirichlet problem with a singular  nonlinearity}, Comm. Partial Differential Equations 2 (1977) 193-222.
	%\bibitem{FGU1} D.G. de Figueiredo, J.P. Gossez, P. Ubilla, {Local superlinearity and sublinearity for the $p$-Laplacian}, J. Funct. Anal. 257 (2009) 721-752.
	%\bibitem{FGU} D.G. de Figueiredo, J.P. Gossez, P. Ubilla, {\it Local superlinearity and sublinearity for indefinite semilinear elliptic problems}, J. Funct. Anal. 199 (2003) 452-467.
	%\bibitem{Dei} K. Deimling, {\it Nonlinear Functional Analysis}, Dover, 2010.
	%\bibitem{OLU} J.M. do \'O, S. Lorca, P. Ubilla, {\it Local superlinearity for elliptic systems involving parameters}, J. Differential Equations 161, 1-19 (2005).
	%\bibitem{E} A. Edelson, {\it Entire solutions of singular elliptic equations}, J. Math. Anal. Appl. 139
	%(1989) 523-532.
	%\bibitem{MPS} S. El Manouni, K. Perera, R. Shivaji, {\it On singular quasimonotone $(p, q)$--aplacian systems}, Proc. Roy. Soc. Edinburgh. Sect A 142, (2012), 585-594.
	%\bibitem{FM} W. Fulks, J.S. Maybee, {\it A singular non-linear equation}, Osaka Math. J. 12 (1960) 1-19.
	%\bibitem{GST} J. Giacomoni, I. Schindler, P. Takác, {\it Singular quasilinear elliptic systems and H\"older regularity}, Adv. Differential Equations 20 (2015), 259-298.
	%\bibitem{Goy} S. Goyal, K. Sreenadh, {\it Fractional elliptic equations with sign-changing and singular nonlinearity}, Electron J Differ. Equ. 2016; 2016:1?23.
	%	\bibitem{GCS1} J. V. Goncalves, M. L. Carvalho, C. A. Santos, {\it Quasilinear elliptic systems with convex-concave singular terms $\Phi$-Laplacian operator}, Differential and Integral Equations 31, (2018), 231-256.
	
	
	
	
	
	%\bibitem{P} S. I. Pohozhaev, {\it The fibration method for solving nonlinear boundary value problems}, Trudy Mat. Inst. Steklov. 192 (1990) 146-163, translated in Proc. Steklov Inst. Math. 1992, no. 3, 157-173, Differential equations and function spaces (Russian).
	\bibitem{CRMZ} C.A. Santos, R.L. Alves, M. Reis, J. Zhou, {\it Maximal domains of the $(\lambda,\mu)$-parameters to existence of entire positive solutions for singular quasilinear elliptic systems}.  J. Fixed Point Theory Appl. 22, 54 (2020). https://doi.org/10.1007/s11784-020-00783-8.
	\bibitem{SM1} K. Silva, A. Macedo, {\it On the extremal parameters curves of a quasilinear elliptic system of differential equation}, Nonlinear Differential Equations and Applications, (2018) 25:36.https://doi.org/10.1007/s00030-018-0527-5.
	\bibitem{St} M. Struwe, {\it Variational Methods}, Springer, Fourth Edition, 2007.
	%\bibitem{V} J. Vélin, {\it Existence results for some nonlinear elliptic system with lack of compactness}, Nonlinear Anal. 52 (2003) 1017-1034.
	%\bibitem{W}T.F.Wu, {\it The Nehari manifold for a semilinear elliptic system involving sign-changing weight functions}, Nonlinear Anal. 68 (2008) 1733-1745.
	
\end{thebibliography}
\end{document}